\documentclass[12pt]{amsart}
\usepackage[all]{xy}
\usepackage{amsfonts}
\usepackage{amssymb}
\usepackage{pb-diagram}
\usepackage{amsmath}
\usepackage{color}

\textheight22.5cm \textwidth16cm \addtolength{\topmargin}{-20pt}
\evensidemargin-0.5cm \oddsidemargin-0.5cm

\newtheorem{teo}{Theorem}[section]
\newtheorem{lem}[teo]{Lemma}
\newtheorem{prop}[teo]{Proposition}

\newtheorem{dfn}[teo]{Definition}
\newtheorem{rk}[teo]{Remark}
\newtheorem{ex}[teo]{Example}



\begin{document}

\title[Strict essential extensions of $C^*$-algebras and
 Hilbert $C^*$-modules]
{Strict essential extensions of $C^*$-algebras and
 Hilbert $C^*$-modules}

\author{M. Frank}
\thanks{}
\address{Hochschule f\"ur Technik, Wirtschaft und Kultur (HTWK)
Leipzig, Fachbereich IMN, PF 301166, D-04251 Leipzig, F.R.
Germany} \email{mfrank@imn.htwk-leipzig.de} \urladdr{}

\author{A. A. Pavlov}
\thanks{Partially supported by the RFBR (grant 07-01-91555 and
grant 07-01-00046)}
\address{Moscow State University, Department of Geography and
Department of Mechanics and Mathematics, 119 992 Moscow, Russia}
\email{axpavlov@mail.ru} \urladdr{
http://www.freewebs.com/axpavlov}

\begin{abstract} In the present paper we  develop both ideas
of~\cite{Bakic} and the categorical approach to multipliers
from~\cite{Lance, Pav1, Pav2} for the introduction and study of
left multipliers of Hilbert $C^*$-modules. Some properties and, in
particular, the property of maximality among all strictly
essential extensions of a Hilbert $C^*$-module for left
multipliers are proved. Also relations between left essential and
left strictly essential extensions in different contexts are
obtained. Left essential and left strictly essential extensions of
matrix algebras are considered. In the final paragraph the
topological approach to the left multiplier theory of Hilbert
$C^*$-modules is worked out.
\end{abstract}

\maketitle
\section{Introduction}
There are a lot of ways to include a non-unital $C^*$-algebra as
an (essential) ideal into unital ones. Among those extensions
there is a maximal extension, the algebra of multipliers. This
object can be considered from different points of view.
Historically the first definition of this algebra arose in the
context of centralizers in~\cite{Busby}. There exists another
definition of multipliers given via the universal representations
of $C^*$-algebras. This approach may be generalized for arbitrary
non-degenerated faithful representations of $C^*$-algebras on
Hilbert $C^*$-modules, cf.~\cite{Lance}. Besides this we can
understand algebras of multipliers as the set of all adjointable
maps from a $C^*$-algebra to itself. Indeed, the latter approach
is the most suitable for a generalization of these constructions
to Hilbert $C^*$-modules. In~\cite{Bakic} multipliers of Hilbert
$C^*$-modules were introduced and their universal property was
obtained. In~\cite{Raeburn} these notions were significantly used
both for a generalization of the Kasparov stabilization theorem to
the non-unital case and for an extension of the concept of module
frames in Hilbert $C^*$-modules over non-unital $C^*$-algebras as
a continuation of ideas on module frame concepts explained
in~\cite{Frank1,Frank2}.

In the present paper we will continue both the ideas
of~\cite{Bakic} and the categorical approach to multipliers
from~\cite{Lance, Pav1, Pav2} for an introduction of a notion of
left multipliers of Hilbert $C^*$-modules. The text is organized
in the following way. To $\S 2$ we include some remarks on Hilbert
$C^*$-modules and on categorical constructions of (left)
multipliers of $C^*$-algebras. In $\S 3$ some properties and, in
particular, the property to be a left strictly essential extension
(Theorem~\ref{teo:mod_l_s_es_ext}) and the property of maximality
(Theorem~\ref{teo:left_mult_mod}) for left multipliers of Hilbert
$C^*$-modules are obtained. In $\S 4$ we study differences between
essential and strictly essential extensions both in $C^*$- and
Banach situations. In $\S 5$ left essential and left strictly
essential extensions of matrix algebras are studied and the
property of their maximality are considered. Finally, $\S 6$ is
dedicated to the approach to the left multiplier theory
considering appropriate analogs of strict topologies.

\section{Preliminaries and reminding}
To begin with, let us remind that for a $C^*$-algebra $A$ a
\emph{pre-Hilbert $A$-module } is a (right) $A$-module $V$
equipped with a semi-linear map $\langle\cdot,\cdot\rangle
:V\times V\rightarrow A$ such that
\begin{enumerate}
    \item $\langle x,x\rangle\ge 0$ for all $x\in V$,
    \item $\langle x,x\rangle= 0$ if and only if $x=0$,
    \item $\langle x,y\rangle^*=\langle y,x\rangle$ for all $x,y
    \in V$,
    \item  $\langle x,ya\rangle=\langle x,y\rangle a$ for all
    $x,y\in V, a\in A$.
\end{enumerate}
The map $\langle\cdot,\cdot\rangle$ is called an $A$-valued inner
product. A norm can be defined for any pre-Hilbert module $V$ by
the formula
$$
\|x\|=\|\langle x,x\rangle\|^{1/2},\quad x\in V.
$$
A pre-Hilbert $A$-module is a \emph{Hilbert $A$-module} if it is
complete with respect to this norm.

Let $V_1$, $V_2$ be Hilbert $A$-modules. Then by ${\rm
Hom}_A(V_1,V_2)$ we will denote the set of all
$A$-linear bounded operators from $V_1$ to $V_2$. When $V_1=V_2=V$
we will write ${\rm End}_A(V)$ instead of ${\rm Hom}_A(V,V)$.

Let us remind that an operator $T\in {\rm Hom}_A(V_1,V_2)$ admits
an adjoint operator if there exists an element $T^*\in {\rm Hom}_A
(V_2,V_1)$ such that
\begin{gather*}
    \langle Tx,y\rangle=\langle x, T^*y\rangle  \quad\text{for
    all}\quad x\in V_1, y\in V_2.
\end{gather*}
By ${\rm End}^*_A(V)$ we denote the subset of ${\rm End}_A(V)$
consisting of operators which possess adjoint ones.

Any $C^*$-algebra $A$ can be considered as a
right Hilbert $A$-module over itself with the inner product
$\langle a,b\rangle=a^*b$. Then the C*-algebra $M(A)$ of
multipliers of $A$ can be defined as $M(A)={\rm End}^*_A(A)$ and
the Banach algebra $LM(A)$ of left multipliers of $A$ can be
defined as $LM(A)={\rm End}_A(A)$,
cf.~\cite{Green,Kasparov,LinPJM}.

In~\cite{Pav1,Pav2} this definition has been
extended to rather more general situations. Let us briefly remind
these notions and results, because we will need them later. Let
$\mathcal{B}$ be a Banach algebra and suppose the existence of a
$C^*$-subalgebra $A \subseteq \mathcal{B}$ which is a left ideal
of $\mathcal{B}$.

\begin{dfn}\rm\label{dfn:left_es_id}
$A$ is said to be a \emph{left essential ideal} of $\mathcal{B}$
if one of the following equivalent conditions holds:
\begin{enumerate}
    \item any two-sided ideal of $\mathcal{B}$ has a non-trivial intersection
    with $A$;
    \item there does not exist any non-zero element $b\in B$ such that
    $ba=0$ for all $a\in A$, i.e.~the two-sided ideal
    $\mathcal{B}_0:=\{b\in \mathcal{B} : ba=0 \quad\text{for all}\quad
    a\in A\}$ of $\mathcal{B}$ equals to zero.
\end{enumerate}
\end{dfn}

\begin{dfn}\rm
 $A$ is said to be a \emph{left strictly essential ideal} of
 $\mathcal{B}$ (and $\mathcal{B}$ is said to be a \emph{left strictly
 essential extension} of $A$)
  if the following condition holds
\begin{gather}\label{eq:str_es_ext_banalg}
    \|b\|=\sup\{\|ba\| : a\in A, \|a\|\le 1\} \quad\text{for
    all}\quad b\in \mathcal{B}.
\end{gather}
\end{dfn}

Any left strictly essential ideal is a left essential one, because
the equality~(\ref{eq:str_es_ext_banalg}) implies the second
condition of Definition~\ref{dfn:left_es_id}. But the inverse
statement is not true, i.e. a left essential ideal of a Banach
algebra might not be a left strictly essential one in
contradistinction to the case when $\mathcal{B}$ is a
$C^*$-algebra, because in the $C^*$-case both these properties of
ideals coincide (see~\cite[Lemmas 7, 8]{Pav1}). Let us remark that
a Banach algebra $\mathcal{B}$ may contain not only the fixed
$C^*$-algebra $A$, but its isomorphic copy too as a left essential
or strictly essential ideal. In such situations we also will call
$\mathcal{B}$ as either a left essential or a left strictly
essential extension of $A$.

Let $A$ be a $C^*$-algebra and $\mathcal{B}$ be an
algebra with an involution containing $A$ as a left essential ideal.
Then the map
\begin{gather*}
    \|\cdot\| : \mathcal{B}\rightarrow [0,\infty)
\end{gather*}
introduced by the formula~(\ref{eq:str_es_ext_banalg}) defines a
norm on $\mathcal{B}$ such that $A$ is a left strictly essential
ideal of $\mathcal{B}$ with respect to it. But, in general,
$(\mathcal{B},\|\cdot\|)$ may not be a Banach algebra because
$\mathcal B$ might not be complete with respect to that norm.

\begin{lem} {\rm (\cite{Pav1}).}\label{lem:cont_to_mult}
Let $A, C$ be $C^*$-algebras, $\mathcal{B}$ be a Banach algebra,
$A\subset \mathcal{B}$ be a left ideal, $E$ be a Hilbert
$C$-module and $\rho : A\rightarrow {\rm End}^*_C(E)$ be a
non-degenerate representation of $A$ in $E$. Then there is a
unique extension of $\rho$ to a morphism $\widetilde{\rho} :
\mathcal{B}\rightarrow {\rm End}_C(E)$ of Banach algebras. If in
addition $A$ is a left strictly essential ideal and $\rho$ is
injective, then $\widetilde{\rho}$ is an isometry.
\end{lem}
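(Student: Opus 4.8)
The plan is to construct $\widetilde{\rho}$ by a strong-limit procedure and then read off all its properties by testing on a dense submodule. Fix an approximate unit $(e_\lambda)$ of the $C^*$-algebra $A$ with $\|e_\lambda\|\le 1$. Since $A$ is a left ideal of $\mathcal{B}$, each product $be_\lambda$ lies in $A$, so $\rho(be_\lambda)\in{\rm End}^*_C(E)$ is defined and satisfies $\|\rho(be_\lambda)\|\le\|be_\lambda\|\le\|b\|$, because a $*$-homomorphism of $C^*$-algebras is contractive. First I would show that for every $\eta$ in the submodule $\rho(A)E$, which is dense by non-degeneracy, the net $\rho(be_\lambda)\eta$ converges: writing $\eta=\rho(a)\xi$ one has $\rho(be_\lambda)\rho(a)\xi=\rho(be_\lambda a)\xi$, and $be_\lambda a\to ba$ in $A$ together with contractivity of $\rho$ forces $\rho(be_\lambda a)\xi\to\rho(ba)\xi$. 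A uniformly bounded net that converges on a dense subset converges everywhere, so the strong limit $\widetilde{\rho}(b):=\lim_\lambda\rho(be_\lambda)$ exists as a bounded $C$-linear operator with $\|\widetilde{\rho}(b)\|\le\|b\|$, characterized by $\widetilde{\rho}(b)\rho(a)\xi=\rho(ba)\xi$. Note that $\widetilde{\rho}(b)$ need not be adjointable, which is precisely why the codomain is ${\rm End}_C(E)$ rather than ${\rm End}^*_C(E)$.

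Second, I would verify that $\widetilde{\rho}$ is a contractive algebra morphism extending $\rho$, again by testing on $\rho(A)E$. Linearity is immediate; for multiplicativity one computes $\widetilde{\rho}(b_1b_2)\rho(a)\xi=\rho(b_1b_2a)\xi=\widetilde{\rho}(b_1)\rho(b_2a)\xi=\widetilde{\rho}(b_1)\widetilde{\rho}(b_2)\rho(a)\xi$, using $b_2a\in A$; and $\widetilde{\rho}(a_0)\rho(a)\xi=\rho(a_0a)\xi=\rho(a_0)\rho(a)\xi$ shows $\widetilde{\rho}|_A=\rho$. For uniqueness, if $\sigma$ is any Banach-algebra morphism extending $\rho$, then for $a\in A$ we have $\sigma(b)\rho(a)=\sigma(b)\sigma(a)=\sigma(ba)=\rho(ba)=\widetilde{\rho}(b)\rho(a)$; since both $\sigma(b)$ and $\widetilde{\rho}(b)$ are bounded and agree on the dense set $\rho(A)E$, they coincide.

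Finally, for the isometry statement I would combine strict essentiality with the fact that an injective $*$-homomorphism of $C^*$-algebras is isometric, so $\|\rho(c)\|=\|c\|$ for all $c\in A$. Since $ba\in A$ and $\rho(ba)=\widetilde{\rho}(b)\rho(a)$, this gives $\|ba\|=\|\rho(ba)\|\le\|\widetilde{\rho}(b)\|\,\|\rho(a)\|=\|\widetilde{\rho}(b)\|\,\|a\|$. Taking the supremum over $a\in A$ with $\|a\|\le1$ and invoking the defining equality $\|b\|=\sup\{\|ba\|:a\in A,\ \|a\|\le1\}$ of a left strictly essential ideal yields $\|b\|\le\|\widetilde{\rho}(b)\|$, which together with the bound $\|\widetilde{\rho}(b)\|\le\|b\|$ from the construction gives $\|\widetilde{\rho}(b)\|=\|b\|$.

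I expect the main obstacle to be the first step: ensuring that the assignment $\rho(a)\xi\mapsto\rho(ba)\xi$ really determines a single bounded operator on all of $E$. The delicate points are that $\rho(A)E$ is only dense, so one must pass to a limit controlled by the uniform bound $\|b\|$, and that one cannot assume the limit operator is adjointable. Both features genuinely reflect that $A$ is merely a left ideal in a Banach algebra rather than a $C^*$-subalgebra, whereas the multiplicativity, uniqueness, and isometry arguments are comparatively routine once the construction is in place.
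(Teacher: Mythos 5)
The paper offers no proof of this lemma at all: it is imported verbatim from \cite{Pav1}, so there is no internal argument to compare yours against. Your proof is correct, and it is the standard construction one would expect (and essentially the argument of the cited source): define $\widetilde{\rho}(b)$ as the strong limit $\lim_\lambda\rho(be_\lambda)$ along an approximate unit, pin it down by the identity $\widetilde{\rho}(b)\rho(a)\xi=\rho(ba)\xi$ on the dense submodule generated by $\rho(A)E$, and then derive multiplicativity, the extension property, uniqueness, and --- under strict essentiality together with injectivity, which makes $\rho$ isometric --- the equality $\|\widetilde{\rho}(b)\|=\|b\|$ from that identity. Two small points deserve explicit mention. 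First, your estimate $\|\rho(be_\lambda)\|\le\|be_\lambda\|\le\|b\|$ tacitly identifies the $C^*$-norm of $A$ with the norm inherited from $\mathcal{B}$; this convention is implicit throughout the paper (e.g.\ in the definition of a left strictly essential ideal, where the same identification is needed for the supremum formula to be consistent on elements of $A$), so invoking it is legitimate, but it should be stated. Second, non-degeneracy gives density of the \emph{span} of $\rho(A)E$, so the convergence and agreement arguments should formally be run on finite sums $\sum_i\rho(a_i)\xi_i$ rather than on single vectors $\rho(a)\xi$; the estimates are unchanged, so this is cosmetic rather than a gap.
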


\begin{dfn}\rm Let $A, C$ be $C^*$-algebras,
$E$ be a Hilbert $C$-module and $\rho : A\rightarrow {\rm
End}^*_C(E)$ be a faithful non-degenerate representation of $A$ in
$E$. Then $(E,C,\rho)$ is \emph{an admissible for $A$ triple}.
\end{dfn}

\begin{dfn}\rm\label{dfn:left_mult}
Let $(E,C,\rho)$ be an admissible for $A$ triple.
Then the set of left $(E,C,\rho)$-multipliers of $A$ is defined as
\begin{gather*}
    LM_{(E,C,\rho)}(A)=\{T\in {\rm End}_C(E) : T\rho(A)\subset \rho(A)\}
    \, .
\end{gather*}
\end{dfn}

The standard definition of the left multipliers $LM(A)$ of a
$C^*$-algebra $A$ is a special case of Definition~\ref{dfn:left_mult}
corresponding to the triple $(A,A,\alpha)$, where
\begin{gather*}
    \alpha : A\rightarrow {\rm End}^*_A(A),
    \qquad \alpha
    (a)b=ab\quad (a,b\in A)\, .
\end{gather*}

\begin{dfn}\rm\label{dfn:max_s_e_e}
   A left strictly essential extension $\widehat{\mathcal{B}}$ of $A$
   is \emph{maximal} if for any other left strictly essential
   extension $\mathcal{B}$ of $A$ there is an isometrical
   homomorphism from $\mathcal{B}$ to $\widehat{\mathcal{B}}$, which
   acts identically on the two copies of $A$.
\end{dfn}

\begin{teo}{\rm (\cite{Pav1}).}\label{teo:left_mult1}
    For any admissible for $A$ triple $(E,C,\rho)$ the algebra
    of the left $(E,C,\rho)$-multipliers is a maximal left strictly
    essential extension of $A$.
\end{teo}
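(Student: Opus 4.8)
The plan is to unpack the phrase ``maximal left strictly essential extension'' into two tasks: first, that $LM_{(E,C,\rho)}(A)$ really is a left strictly essential extension of $A$ in the sense of the norm formula~(\ref{eq:str_es_ext_banalg}); second, that it is maximal among all such extensions in the sense of Definition~\ref{dfn:max_s_e_e}. Throughout I write $LM := LM_{(E,C,\rho)}(A)$ and identify $A$ with its image $\rho(A)$, noting that a faithful $*$-homomorphism of $C^*$-algebras is isometric, so that $\rho(A)$ is a closed $C^*$-subalgebra of ${\rm End}^*_C(E)$. The first thing to settle is the algebraic and topological framework. Since ${\rm End}_C(E)$ is a Banach algebra, it is enough to see that $LM$ is a norm-closed subalgebra: closure under composition follows from $T_1(T_2\rho(A))\subseteq T_1\rho(A)\subseteq \rho(A)$, and if $T_n\to T$ with $T_n\rho(a)\in\rho(A)$ then $T_n\rho(a)\to T\rho(a)$ forces $T\rho(a)\in\rho(A)$ because $\rho(A)$ is closed. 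Moreover $\rho(A)\subseteq LM$ (as $\rho(a)\rho(A)=\rho(aA)\subseteq\rho(A)$), and directly from the defining condition $\rho(A)$ is a \emph{left} ideal of $LM$. Hence $LM$ is a Banach algebra containing the copy $\rho(A)$ of $A$ as a left ideal.

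Next I would establish the norm identity. For every $T\in LM$ and $a\in A$ with $\|a\|\le 1$ we have $\|T\rho(a)\|\le \|T\|\,\|a\|\le\|T\|$, so the supremum in~(\ref{eq:str_es_ext_banalg}) is at most $\|T\|$. For the reverse inequality the key is non-degeneracy of $\rho$: taking an approximate unit $(u_\lambda)$ of $A$, the density of $\rho(A)E$ in $E$ together with $\|\rho(u_\lambda)\|\le 1$ yields $\rho(u_\lambda)e\to e$ for every $e\in E$. Thus for $\|e\|\le 1$ one gets $\|Te\|=\lim_\lambda\|T\rho(u_\lambda)e\|\le\sup\{\|T\rho(a)\| : \|a\|\le 1\}$, and taking the supremum over such $e$ gives $\|T\|\le\sup\{\|T\rho(a)\| : \|a\|\le 1\}$. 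Combining the two inequalities proves~(\ref{eq:str_es_ext_banalg}) for $LM$, so $\rho(A)$ is a left strictly essential ideal of $LM$.

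For maximality, let $\mathcal{B}$ be any left strictly essential extension of $A$. Applying Lemma~\ref{lem:cont_to_mult} to the non-degenerate faithful representation $\rho$ produces a unique Banach-algebra morphism $\widetilde{\rho}:\mathcal{B}\to{\rm End}_C(E)$ extending $\rho$, which is moreover an isometry because $A$ is left strictly essential in $\mathcal{B}$ and $\rho$ is injective. It remains to check that $\widetilde{\rho}$ takes values in $LM$ and fixes $A$. For $b\in\mathcal{B}$ and $a\in A$, using that $A$ is a left ideal of $\mathcal{B}$ (so $ba\in A$) and that $\widetilde{\rho}$ is a homomorphism restricting to $\rho$ on $A$, we obtain $\widetilde{\rho}(b)\rho(a)=\widetilde{\rho}(b)\widetilde{\rho}(a)=\widetilde{\rho}(ba)=\rho(ba)\in\rho(A)$, whence $\widetilde{\rho}(b)\rho(A)\subseteq\rho(A)$, i.e.\ $\widetilde{\rho}(b)\in LM$. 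Since $\widetilde{\rho}|_A=\rho$ is precisely the identification of the copy of $A$ in $\mathcal{B}$ with the copy $\rho(A)$ in $LM$, the map $\widetilde{\rho}:\mathcal{B}\to LM$ is an isometric homomorphism acting identically on the two copies of $A$, which is exactly the requirement of Definition~\ref{dfn:max_s_e_e}.

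I expect the main technical point to be the reverse inequality in the norm formula, namely the correct use of non-degeneracy---via an approximate unit and the density of $\rho(A)E$---to transfer information about the action of $T$ on $\rho(A)$ back to its action on all of $E$. Once that is in place, the maximality half is essentially a bookkeeping application of Lemma~\ref{lem:cont_to_mult}, the only genuine verification being that the image of $\widetilde{\rho}$ satisfies the multiplier condition $\widetilde{\rho}(b)\rho(A)\subseteq\rho(A)$.
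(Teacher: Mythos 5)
Your proof is correct. Note that the paper itself contains no proof of this statement---Theorem~\ref{teo:left_mult1} is imported from \cite{Pav1} as a black box---so the only internal benchmark is the machinery the paper quotes alongside it, and your argument uses exactly that machinery: strict essentiality of $LM_{(E,C,\rho)}(A)$ via an approximate unit together with non-degeneracy of $\rho$, and maximality as a direct application of Lemma~\ref{lem:cont_to_mult} (whose uniqueness and isometry clauses you invoke precisely where they are needed, including the verification that $\widetilde{\rho}(b)\rho(A)\subseteq\rho(A)$ so that $\widetilde{\rho}$ lands in the multiplier algebra). This matches the pattern the authors themselves follow when proving the Hilbert-module analogue (Theorem~\ref{teo:left_mult_mod}), so your reconstruction is faithful to the intended argument.
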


\begin{teo}{\rm (\cite{Pav1}).}\label{teo:left_mult2}
    The Banach algebras of left $(E,C,\rho)$-multipliers are
    isomorphic for all admissible for $A$ triples $(E,C,\rho)$,
    and these isomorphisms act as the identity map on the embedded
    copies of $A$.
\end{teo}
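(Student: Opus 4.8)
The plan is to use Theorem~\ref{teo:left_mult1} together with the universal property defining a maximal left strictly essential extension (Definition~\ref{dfn:max_s_e_e}). Fix two admissible triples $(E_1,C_1,\rho_1)$ and $(E_2,C_2,\rho_2)$ for $A$, and write $LM_i=LM_{(E_i,C_i,\rho_i)}(A)$, with $A$ embedded into $LM_i$ as $\rho_i(A)$. By Theorem~\ref{teo:left_mult1} both $LM_1$ and $LM_2$ are maximal left strictly essential extensions of $A$. Since $LM_2$ is in particular a left strictly essential extension, maximality of $LM_1$ yields an isometric homomorphism $\phi:LM_2\to LM_1$ with $\phi(\rho_2(a))=\rho_1(a)$ for all $a\in A$. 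Symmetrically, maximality of $LM_2$ yields an isometric homomorphism $\psi:LM_1\to LM_2$ with $\psi(\rho_1(a))=\rho_2(a)$. It then remains to show that $\phi$ and $\psi$ are mutually inverse.

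First I would analyze the composite $\Phi:=\psi\circ\phi:LM_2\to LM_2$. It is an isometric algebra homomorphism, and on the embedded copy of $A$ it satisfies $\Phi(\rho_2(a))=\psi(\rho_1(a))=\rho_2(a)$, so $\Phi$ fixes $\rho_2(A)$ pointwise. Next I would fix $T\in LM_2$ and compute, for an arbitrary $a\in A$,
$$
\Phi(T)\,\rho_2(a)=\Phi(T)\,\Phi(\rho_2(a))=\Phi\bigl(T\rho_2(a)\bigr).
$$
Since $T$ is a left multiplier, $T\rho_2(a)\in\rho_2(A)$, say $T\rho_2(a)=\rho_2(a')$; because $\Phi$ fixes $\rho_2(A)$ this gives $\Phi(T\rho_2(a))=\rho_2(a')=T\rho_2(a)$. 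Hence $\bigl(\Phi(T)-T\bigr)\rho_2(a)=0$ for every $a\in A$.

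The decisive step is then to invoke the left strictly essential property of the ideal $\rho_2(A)\subset LM_2$ furnished by Theorem~\ref{teo:left_mult1}. Putting $S=\Phi(T)-T$ and using that $\rho_2$ is isometric (being a faithful representation of a $C^*$-algebra), formula~(\ref{eq:str_es_ext_banalg}) yields
$$
\|S\|=\sup\{\,\|S\rho_2(a)\| : a\in A,\ \|a\|\le 1\,\}=0,
$$
so $S=0$ and $\Phi=\mathrm{id}_{LM_2}$. The same argument applied to $\phi\circ\psi$ shows $\phi\circ\psi=\mathrm{id}_{LM_1}$. Therefore $\phi$ is an isometric isomorphism of Banach algebras with inverse $\psi$, and by construction it acts identically on the embedded copies of $A$, which is precisely the assertion. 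I expect the main obstacle to be exactly this last verification: extracting genuine bijectivity from the two one-sided universal maps of Definition~\ref{dfn:max_s_e_e}, for which the strictly essential norm formula is essential, since maximality by itself only provides homomorphisms and not isomorphisms.
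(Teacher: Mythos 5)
Your proof is correct. A caveat on comparison: the paper states Theorem~\ref{teo:left_mult2} only as a citation from \cite{Pav1} and contains no proof of it, so there is no in-paper argument to measure your proposal against; judged on its own merits, every step is sound. Maximality (Definition~\ref{dfn:max_s_e_e}), supplied by Theorem~\ref{teo:left_mult1}, gives the two isometric homomorphisms $\phi$ and $\psi$ fixing the embedded copies of $A$; the composite $\Phi=\psi\circ\phi$ fixes $\rho_2(A)$ pointwise; the defining property $T\rho_2(A)\subset\rho_2(A)$ of left multipliers (Definition~\ref{dfn:left_mult}) yields $(\Phi(T)-T)\rho_2(a)=0$ for all $a\in A$; and the strictly essential norm formula~(\ref{eq:str_es_ext_banalg}) then forces $\Phi(T)=T$, so $\phi$ and $\psi$ are mutually inverse. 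You are also right that this last step is exactly where maximality alone is insufficient. Two minor observations: first, to conclude $S=\Phi(T)-T=0$ you need only the weaker essentiality condition (ii) of Definition~\ref{dfn:left_es_id} (no non-zero element annihilates $A$ from the left), which follows from the norm formula you invoke, so your argument is slightly stronger than necessary; second, an alternative to your direct computation --- and the device this paper itself uses in the analogous module setting of Theorem~\ref{teo:left_mult_mod} --- is the uniqueness clause of Lemma~\ref{lem:cont_to_mult}: both $\Phi$ and the identity, viewed as Banach algebra morphisms $LM_2\to{\rm End}_{C_2}(E_2)$, extend the faithful non-degenerate representation $\rho_2$ of $A$, hence coincide. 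Either way the conclusion is the same, and your writeup is a complete and correct proof of the statement.
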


\section{Left multipliers of Hilbert $C^*$-modules}

In this section we introduce the notion of left multipliers of
Hilbert $C^*$-modules as a particular form of a strict essential
extension of the respective Hilbert $C^*$-module. We prove the
generic maximality for the left multipliers of a Hilbert
$C^*$-module among all of its strict essential extensions.

\begin{dfn}\rm\label{dfn: Ban_ext}
Let $A$ be a $C^*$-algebra, let $V$ be a Hilbert $A$-module.
A \emph{Banach extension} of $V$ is a triple $(W,\mathcal{B},\Phi)$,
where
\begin{enumerate}
    \item $\mathcal{B}$ is a Banach algebra, $A\subset \mathcal{B}$
     is a left ideal;
    \item $W$ is a Banach $\mathcal{B}$-module;
    \item $\Phi : V\rightarrow W$ is an $A$-linear isometric map;
    \item ${\rm Im} \Phi=WA:=\overline{\rm{span}_A}\{y : y\in
    W\}$.
\end{enumerate}
\end{dfn}

\begin{rk}\rm The forth condition of Definition~\ref{dfn: Ban_ext}
is an analogue of the requirement to the representation $\rho$ from
Definition~\ref{dfn:left_mult} to be non-degenerate.
\end{rk}

\begin{dfn}\rm
The Banach extension $(W,\mathcal{B},\Phi)$ of $A$ is
\emph{strictly essential} one if $A\subset \mathcal{B}$ is a left
strictly essential ideal and the following condition holds
\begin{gather}\label{eq:str_es_Ban_ext}
    \|y\|=\sup\{\|ya\| : a\in A,\|a\|\le 1\}\quad\text{for
    all}\quad y\in W.
\end{gather}
\end{dfn}

\begin{ex}\rm\label{ex:id_Ban_ext}
For any Hilbert $A$-module $V$ the triple $(V,A,{\rm id_V})$,
where $\rm {id_V}:V\rightarrow V$ is an identical map, is
(\emph{an identical}) strictly essential Banach extension of $V$,
because for any approximative unit $\{e_\alpha\}$ of $A$ and
for any $x\in V$ the net $xe_\alpha$ converges with respect to
the norm to $x$ (see~\cite[Lemma 1.3.8]{MaTroBook}).
In particular, the triple $(A,A,{\rm id_A})$ is a strictly
essential Banach extension of the Hilbert $A$-module $A$.
\end{ex}

\begin{ex}\rm Let $A\subset \mathcal{B}$ be a left strictly essential ideal
and let us denote this embedding by $i$. Let us consider
$\mathcal{B}$ as a Banach module over itself and $A$ as a Hilbert
module over itself. Then the triple $(\mathcal{B},\mathcal{B},i)$
is a strictly essential extension of
 $A$.
\end{ex}

\begin{dfn}\rm\label{dfn:Ban_s_e_e}
A Banach strictly essential extension
$(\widehat{W},\widehat{\mathcal{B}},\widehat{\Phi})$ of a Hilbert
$A$-module $V$ is \emph{maximal} if for any other Banach strictly
essential extension $(W,\mathcal{B},\Phi)$ there are an
isometrical homomorphism $\lambda : \mathcal{B}\rightarrow
\widehat{\mathcal{B}}$ which is identical on $A$ and an
isometrical linear map $\Lambda : W\rightarrow \widehat{W}$ such
that it is a $\lambda$-homomorphism, i.e.
\begin{gather*}
    \Lambda (yb)=\Lambda(y)\lambda(b)\quad\text{for all}\quad
    y\in W, b\in \mathcal{B},
\end{gather*}
and the following diagram
\[
\begin{diagram}
\node{W}\arrow[2]{e,t}{\Lambda} \node[2]{\widehat{W}}
\\ \node[2]{V}\arrow{ne,r}{\widehat{\Phi}}\arrow{nw,b}{\Phi}
\end{diagram}
\]
is commutative.
\end{dfn}

As a consequence $\Lambda$ maps $\Phi(V)$ onto $\widehat{\Phi}(V)$
identifying both the copies of $V$. If we consider only Banach
algebras $\mathcal{B}$ which are both modules over themselves and
left strict essential extensions of $A$, i.e. only Banach strictly
essential extensions of kind $(\mathcal{B},\mathcal{B},i)$, where
$i$ denotes the embedding of $A$ into $\mathcal{B}$, then apparently
Definition~\ref{dfn:Ban_s_e_e} coincides with
Definition~\ref{dfn:max_s_e_e}.

\begin{dfn}\rm\label{dfn:left_mult_mod}
Let $V$ be a Hilbert module over a $C^*$-algebra
$A$. Then \emph{the set of left multipliers} $LM(V)$ of $V$
is defined by
\begin{gather*}
    LM(V):={\rm Hom_A}(A,V).
\end{gather*}
\end{dfn}

Let us define the action of $LM(A)$ on $LM(V)$ by the formula
\begin{gather}\label{eq:LM_action}
    (yb)(a)=y(b(a)),\qquad y\in LM(V),b\in LM(A),a\in A,
\end{gather}
what turns the set $LM(V)$ into a right $LM(A)$-module.
Furthermore, define the map $\Gamma : V\rightarrow LM(V)$ in the
following way
\begin{gather}\label{eq:def_Gamma}
    (\Gamma (x))(a)=xa, \qquad x\in V, a\in A.
\end{gather}
If $V=A$, then, obviously, Definition~\ref{dfn:left_mult_mod}
coincides with the standard definition of the left multipliers
of the $C^*$-algebra $A$. The map $\Gamma$ is an isometric
embedding of $V$ into $LM(V)$ as a Banach $A$-submodule.

\begin{teo}\label{teo:mod_l_s_es_ext}
   Let $V$ be a Hilbert module over a $C^*$-algebra
   $A$. For $LM(V)$ being the set of left multipliers of $V$ the
   triple $(LM(V),LM(A),\Gamma)$ is a strictly essential Banach
   extension of a Hilbert $A$-module $V$.
\end{teo}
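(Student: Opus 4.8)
The plan is to verify, one at a time, the four defining conditions of a Banach extension for the triple $(LM(V),LM(A),\Gamma)$ and then the two strict-essentiality norm equalities \eqref{eq:str_es_ext_banalg} and \eqref{eq:str_es_Ban_ext}. Throughout I identify $A$ with its image $\alpha(A)\subset LM(A)$ under the left-regular embedding $\alpha(a)b=ab$. The whole argument rests on two elementary multiplication identities coming from right $A$-linearity: for $T\in LM(A)=\mathrm{End}_A(A)$ and $a\in A$ one has $T\alpha(a)=\alpha(T(a))$, since $(T\alpha(a))(b)=T(ab)=T(a)b=\alpha(T(a))(b)$; and for $y\in LM(V)=\mathrm{Hom}_A(A,V)$ and $a\in A$ one has $y\alpha(a)=\Gamma(y(a))$, since $(y\alpha(a))(b)=y(ab)=y(a)b=(\Gamma(y(a)))(b)$ by \eqref{eq:LM_action} and \eqref{eq:def_Gamma}. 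These two identities do essentially all of the work.

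First I would record the routine Banach-algebraic facts underlying conditions (i) and (ii). The algebra $LM(A)=\mathrm{End}_A(A)$ is a Banach algebra under composition and the operator norm, and the first identity $T\alpha(a)=\alpha(T(a))$ shows at once that $\alpha(A)$ is a left ideal of it, which is condition (i). Next, $LM(V)=\mathrm{Hom}_A(A,V)$ is a Banach space, being a norm-closed subspace of the bounded linear maps $A\to V$ (right $A$-linearity is preserved under pointwise norm limits). The formula \eqref{eq:LM_action} defines a bilinear action of $LM(A)$ on $LM(V)$ whose associativity is immediate from its pointwise definition, and the estimate $\|yb\|=\sup_{\|a\|\le 1}\|y(b(a))\|\le\|y\|\,\|b\|$ shows that it is contractive; hence $LM(V)$ is a Banach $LM(A)$-module, giving condition (ii). For condition (iii), $A$-linearity of $\Gamma$ is the identity $\Gamma(xa)=\Gamma(x)\alpha(a)$, a direct computation, while $\Gamma$ is isometric because $\|\Gamma(x)\|=\sup_{\|a\|\le 1}\|xa\|=\|x\|$, the last equality following from the convergence $\|x e_\lambda\|\to\|x\|$ along an approximate unit $\{e_\lambda\}$ of $A$, exactly as in Example~\ref{ex:id_Ban_ext}.

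The only condition requiring a genuine argument is the non-degeneracy condition (iv), namely $\mathrm{Im}\,\Gamma=LM(V)\cdot A$. The inclusion $\supseteq$ is immediate from the second identity, since every generator $y\alpha(a)$ equals $\Gamma(y(a))\in\mathrm{Im}\,\Gamma$, and $\mathrm{Im}\,\Gamma$ is closed, being the isometric image of the complete space $V$. For $\subseteq$ I would again invoke the approximate unit: for $x\in V$, $\Gamma(x)\alpha(e_\lambda)=\Gamma(x e_\lambda)\to\Gamma(x)$ because $x e_\lambda\to x$ and $\Gamma$ is isometric, so each $\Gamma(x)$ lies in the closed span $LM(V)\cdot A$. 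This is the step I expect to be the main, if still modest, obstacle, since it is the only place where a convergence argument rather than a pointwise algebraic identity is needed.

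Finally, the two strict-essentiality equalities follow immediately from the two identities together with the fact that the ambient norms are operator norms. For $b\in LM(A)$ one has $\|b\alpha(a)\|=\|\alpha(b(a))\|=\|b(a)\|$ (as $\alpha$ is isometric), whence $\sup_{\|a\|\le 1}\|b\alpha(a)\|=\sup_{\|a\|\le 1}\|b(a)\|=\|b\|$, establishing \eqref{eq:str_es_ext_banalg}; equivalently one may simply cite Theorem~\ref{teo:left_mult1} for the triple $(A,A,\alpha)$, for which $LM(A)=\mathrm{End}_A(A)$. Likewise, for $y\in LM(V)$ one has $\|y\alpha(a)\|=\|\Gamma(y(a))\|=\|y(a)\|$, so $\sup_{\|a\|\le 1}\|y\alpha(a)\|=\sup_{\|a\|\le 1}\|y(a)\|=\|y\|$, which is exactly \eqref{eq:str_es_Ban_ext}. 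Collecting all of this shows that $(LM(V),LM(A),\Gamma)$ is a strictly essential Banach extension of $V$.
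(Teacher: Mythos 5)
Your proposal is correct and takes essentially the same route as the paper: the decisive step in both is condition (iv), established via the identity $Ta=\Gamma(T(a))$ for one inclusion and the approximate-unit argument $\Gamma(x)=\lim_\alpha\Gamma(x)e_\alpha$ for the other. The only difference is presentational — the paper dismisses conditions (i)--(iii) and the norm equalities as straightforward (citing Theorem~\ref{teo:left_mult1} for strict essentiality of $A$ in $LM(A)$), whereas you verify them explicitly via the same two multiplication identities.
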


\begin{proof}
It is clear that $LM(V)$ is a Banach $LM(A)$-module with respect
to the action~(\ref{eq:LM_action}). Beside this, $A$ is a left
strictly essential ideal in $LM(A)$ by
Theorem~\ref{teo:left_mult1}. Moreover, it is a straightforward
verification that the map~(\ref{eq:def_Gamma}) is an $A$-module
isometry and that the equality~(\ref{eq:str_es_Ban_ext}) is holds.
So it remains only to check the forth condition of
Definition~\ref{dfn: Ban_ext}.

Let us choose any approximative identity $\{e_\alpha\}$ in $A$.
Then for any $x\in V$ we can write
\begin{gather*}
    \Gamma (x)=\lim_\alpha \Gamma(xe_\alpha)=\lim_\alpha
    \Gamma(x)e_\alpha
\end{gather*}
and, consequently, ${\rm Im}\Gamma\subset LM(V)A$. To obtain the
inverse inclusion let us take any $T\in LM(V)$, $a\in A$, then for
all $b\in A$ we have
\begin{gather*}
    (Ta)(b)=T(ab)=T(a)b=\Gamma(T(a)) b,
\end{gather*}
so $Ta=\Gamma(T(a))$ and we have got the desired set identity
${\rm Im}\Gamma= LM(V)A$.
\end{proof}

\begin{teo}\label{teo:left_mult_mod}
   Let $V$ be a Hilbert module over a $C^*$-algebra $A$. The
   strictly essential Banach extension $(LM(V),LM(A),\Gamma)$
   of any Hilbert $A$-module $V$ is maximal.
\end{teo}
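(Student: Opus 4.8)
The plan is to check the defining property of maximality in Definition~\ref{dfn:Ban_s_e_e} directly: given an arbitrary Banach strictly essential extension $(W,\mathcal{B},\Phi)$ of $V$, I must produce an isometric homomorphism $\lambda : \mathcal{B}\rightarrow LM(A)$ identical on $A$ and an isometric $\lambda$-homomorphism $\Lambda : W\rightarrow LM(V)$ making the triangle commute. The map $\lambda$ comes essentially for free. Since $A\subset\mathcal{B}$ is a left strictly essential ideal, $\mathcal{B}$ is a left strictly essential extension of $A$, and $LM(A)={\rm End}_A(A)$ is the algebra of $(A,A,\alpha)$-multipliers, which is a maximal left strictly essential extension of $A$ by Theorem~\ref{teo:left_mult1}. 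Hence Definition~\ref{dfn:max_s_e_e} immediately furnishes an isometric homomorphism $\lambda : \mathcal{B}\rightarrow LM(A)$ acting identically on $A$.

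Before building $\Lambda$ I would record how $\lambda$ acts. As $\lambda$ is multiplicative, restricts to the standard embedding $\alpha$ on $A$, and $ba\in A$ for all $b\in\mathcal{B}$, $a\in A$ (because $A$ is a left ideal), evaluating the identity $\lambda(ba)=\lambda(b)\alpha(a)$ at $c\in A$ gives $\lambda(b)(ac)=bac$. Since $A^2$ is dense in $A$ (apply an approximate identity $\{e_\alpha\}$) and $\lambda(b)$ is bounded, this yields the explicit formula $\lambda(b)(a)=ba$ for all $a\in A$, which is what later lets the two constructions interlock.

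For $\Lambda$ I would exploit that the fourth condition of Definition~\ref{dfn: Ban_ext} makes $\Phi$ an isometric isomorphism of $V$ onto $WA={\rm Im}\,\Phi$, so $\Phi^{-1}$ is defined on $WA$; as $ya\in WA$ for all $y\in W$, $a\in A$, I can set
\begin{gather*}
    (\Lambda(y))(a):=\Phi^{-1}(ya),\qquad y\in W,\ a\in A.
\end{gather*}
The $A$-linearity of $\Phi$ shows $\Lambda(y)$ is a bounded $A$-linear map $A\rightarrow V$, i.e.\ an element of $LM(V)={\rm Hom}_A(A,V)$. Isometry of $\Lambda$ falls out of the strictly essential condition~(\ref{eq:str_es_Ban_ext}): since $\|(\Lambda(y))(a)\|=\|\Phi^{-1}(ya)\|=\|ya\|$, taking the supremum over $\|a\|\le 1$ gives $\|\Lambda(y)\|=\|y\|$. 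Commutativity is equally routine, since $(\Lambda(\Phi(x)))(a)=\Phi^{-1}(\Phi(x)a)=\Phi^{-1}(\Phi(xa))=xa=(\Gamma(x))(a)$.

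The last and most delicate step is the $\lambda$-homomorphism identity $\Lambda(yb)=\Lambda(y)\lambda(b)$, and I expect this compatibility to be the main obstacle, since it is the only place where the independently constructed $\lambda$ and $\Lambda$ must cohere. Evaluating at $a\in A$, the action~(\ref{eq:LM_action}) and the formula $\lambda(b)(a)=ba$ give $(\Lambda(y)\lambda(b))(a)=(\Lambda(y))(ba)=\Phi^{-1}(y(ba))$, whereas $(\Lambda(yb))(a)=\Phi^{-1}((yb)a)$; these coincide by associativity $(yb)a=y(ba)$ of the module action, with $ba\in A$ ensuring the argument lies in the domain of $\Phi^{-1}$. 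This closes the verification of all conditions of Definition~\ref{dfn:Ban_s_e_e} and establishes the asserted maximality.
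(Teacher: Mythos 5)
Your proof is correct and follows essentially the same route as the paper's: obtain $\lambda:\mathcal{B}\rightarrow LM(A)$ from Theorem~\ref{teo:left_mult1}, pin down the formula $\lambda(b)(a)=ba$, define $\Lambda(y)(a)=\Phi^{-1}(ya)$, and get isometry of $\Lambda$ from condition~(\ref{eq:str_es_Ban_ext}). The only cosmetic difference is that the paper deduces $\lambda(b)(a)=ba$ from the uniqueness clause of Lemma~\ref{lem:cont_to_mult}, whereas you derive it by hand from multiplicativity and an approximate identity; the final verifications the paper dismisses as ``obvious computations'' are exactly the ones you spell out.
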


\begin{proof} Let us consider any other strictly
essential Banach extension $(W,\mathcal{B},\Phi)$ of $V$. An
isometrical homomorphism $\lambda : \mathcal{B}\rightarrow LM(A)$
which is identical on $A$ exists by Theorem~\ref{teo:left_mult1},
and, moreover, the uniqueness of this homomorphism
(cf.~Lemma~\ref{lem:cont_to_mult}) is the reason why the equality
$\lambda (b)(a)=ba$ has to hold for all $b\in \mathcal{B}, a\in A$.

Now let us define the map $\Lambda : W\rightarrow LM(V)$ by the
formula
\begin{gather*}
   \Lambda(y)(a):=\Gamma(\Phi^{-1}(y))(a)=
    \Phi^{-1}(ya),\quad y\in W, a\in A \, .
\end{gather*}
This definition is correct because $ya\in {\rm Im}\Phi$ and $\Phi$
is an isometry. Further, for any $y\in W$ the following equalities
hold due to~(\ref{eq:str_es_Ban_ext}):
\begin{gather*}
    \|\Lambda y\|=\sup \{\|\Lambda (y)(a)\| : a\in A,\|a\|\le 1\}=
    \sup\{\|ya\| : a\in A,\|a\|\le 1\}=\|y\|.
\end{gather*}
Consequently, $\Lambda$ is an isometry. The properties of $\Lambda$
to be a $\lambda$-homomorphism and to fulfil the equality $\Lambda
\Phi=\Gamma$ can be derived by obvious computations.
\end{proof}

\section{Essential and strict essential extensions
of Hilbert $C^*$-modules}

The left strict extensions of Hilbert $C^*$-modules are Banach
module extensions over Banach algebras, in general. So a wide
variety of them might occur in particular situations in difference
to the quite canonical situations appearing in the case of
multiplier modules and (two-sided) strict extensions,
cf.~\cite{Bakic}. We start the investigation of characteristic
situations with the known definition of (two-sided) essential
extensions of Hilbert $C^*$-modules for the situation of
$C^*$-extensions of the $C^*$-algebra of coefficients.

\begin{dfn}\rm (\cite{Bakic})
Let $V$ be a Hilbert $A$-module over a $C^*$-algebra $A$.
An \emph{extension} of $V$ is a triple $(W,B,\Phi)$ such that
\begin{enumerate}
    \item $B$ is a $C^*$-algebra, $A\subset B$ is an ideal;
    \item $W$ is a Hilbert $B$-module;
    \item $\Phi : V\rightarrow W$ is a map satisfying
    $\langle \Phi x,\Phi y\rangle=\langle x,y\rangle$
    for all $x,y\in V$;
    \item ${\rm Im} \Phi=WA$.
\end{enumerate}
If in addition $A$ is an essential ideal of $B$, then the
extension $(W,B,\Phi)$ is called \emph{essential}.
\end{dfn}

\begin{teo}
   Let $(W,B,\Phi)$ be an essential extension of a
   Hilbert $A$-module $V$. Then the mentioned extension is
   automatically strictly essential, i.e.
     the conditions {\rm (\ref{eq:str_es_ext_banalg})},
   {\rm (\ref{eq:str_es_Ban_ext})} hold for it.
\end{teo}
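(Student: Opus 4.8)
The plan is to verify the two defining inequalities separately, handling the coefficient algebra $B$ first and then reducing the module statement to it. In each of (\ref{eq:str_es_ext_banalg}) and (\ref{eq:str_es_Ban_ext}) the inequality ``$\ge$'' is immediate, since $\|ba\|\le\|b\|\,\|a\|\le\|b\|$ and $\|ya\|\le\|y\|\,\|a\|\le\|y\|$ whenever $\|a\|\le 1$; the content lies entirely in the reverse inequality.

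For (\ref{eq:str_es_ext_banalg}) I would argue that left multiplication $b\mapsto L_b$, $L_b(a)=ba$, defines a $*$-homomorphism $B\to\mathrm{End}^*_A(A)=M(A)$ whose operator norm is exactly $\|L_b\|=\sup\{\|ba\|:a\in A,\|a\|\le 1\}$. Since $A$ is an essential ideal of the $C^*$-algebra $B$, the left annihilator $\{b:bA=0\}$ is a closed two-sided ideal meeting $A$ trivially, hence is zero; thus $b\mapsto L_b$ is injective, and an injective $*$-homomorphism of $C^*$-algebras is isometric. This gives $\|b\|=\sup\{\|ba\|:a\in A,\|a\|\le 1\}$, i.e. $A$ is a left strictly essential ideal. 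Equivalently, one may simply invoke the coincidence of essential and strictly essential ideals in the $C^*$-case (\cite[Lemmas 7, 8]{Pav1}) after observing that a two-sided essential ideal is in particular left essential in the sense of Definition~\ref{dfn:left_es_id}.

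For the module inequality (\ref{eq:str_es_Ban_ext}) the key step is to pass to the inner product and reduce to the algebra identity just established. Fix $y\in W$ and set $c=\langle y,y\rangle\in B$, so $c\ge 0$ and $\|y\|^2=\|c\|$. For $a\in A$, $A$-linearity of the inner product and the $C^*$-identity give
\[
\|ya\|^2=\|\langle ya,ya\rangle\|=\|a^*ca\|=\|c^{1/2}a\|^2 ,
\]
the last equality using $\|x^*x\|=\|x\|^2$ with $x=c^{1/2}a$ together with $c^{1/2}c^{1/2}=c$. Since $c^{1/2}\in B$, applying (\ref{eq:str_es_ext_banalg}) to $c^{1/2}$ yields $\sup\{\|c^{1/2}a\|:a\in A,\|a\|\le 1\}=\|c^{1/2}\|=\|c\|^{1/2}$. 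Taking suprema over the unit ball of $A$ in the displayed identity therefore gives $\sup\{\|ya\|^2:a\in A,\|a\|\le 1\}=\|c\|=\|y\|^2$, and hence $\sup\{\|ya\|:a\in A,\|a\|\le 1\}=\|y\|$, which is (\ref{eq:str_es_Ban_ext}).

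These computations are routine once the structure is in place; the only genuinely substantive point is the passage from essential to strictly essential at the algebra level, and I expect that step (injectivity of $B\hookrightarrow M(A)$ and the resulting isometry) to carry the whole argument, with the module statement following formally via the $c^{1/2}$-trick. I note that the non-degeneracy condition ${\rm Im}\,\Phi=WA$ is not needed for either inequality.
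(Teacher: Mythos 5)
Your proof is correct and takes essentially the same route as the paper: both rest on the identity $\|ya\|^2=\|\langle ya,ya\rangle\|=\|a^*\langle y,y\rangle a\|=\|\langle y,y\rangle^{1/2}a\|^2$, which reduces the module condition (\ref{eq:str_es_Ban_ext}) to the algebra condition (\ref{eq:str_es_ext_banalg}), the latter being the coincidence of essential and strictly essential ideals in the $C^*$-case. The only difference is cosmetic: the paper simply cites \cite[Lemma 7]{Pav1} for that algebra-level step, whereas you also reprove it directly (injectivity of $b\mapsto L_b$ into $M(A)$ via the annihilator argument, then isometry of injective $*$-homomorphisms), which is exactly the content of the cited lemma.
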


\begin{proof}
The following equalities can be established for any $y\in W, a\in
A$:
\begin{gather*}
    \|ya\|^2=\|\langle ya,ya\rangle\|=\|a^*\langle y,y\rangle
    a\|=
    \|\langle y,y\rangle^{1/2}a\|^2.
\end{gather*}
Any essential ideal is automatically strictly essential in the
$C^*$-case , i.e. the property~(\ref{eq:str_es_ext_banalg}) holds
(see~\cite[Lemma 7]{Pav1}). Therefore,
\begin{gather*}
    \sup\{\|\langle y,y\rangle^{1/2}a\| : a\in A, \|a\|\le 1\}=
    \|\langle y,y\rangle^{1/2}\|=\|y\|
\end{gather*}
and, consequently, the desired property is obtained.
\end{proof}

At the contrary, the results in the situation of Banach extensions
of Hilbert $A$-modules is completely different from the one
mentioned above as the next statement shows.

\begin{teo}\label{prop:es_not_str_es_B_e}
For any non-unital C*-algebra $A$ there exists a Hilbert $A$-module
$V$ and a Banach extension $(W,\mathcal{B},\Phi)$ of $V$ such that
$A$ is a strictly essential ideal of $\mathcal{B}$, but the
condition~{\rm (\ref{eq:str_es_Ban_ext})} does not hold.
\end{teo}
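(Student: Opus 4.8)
The plan is to produce an explicit counterexample, guided by the following observation: condition~(\ref{eq:str_es_Ban_ext}) can only fail on elements of $W$ lying \emph{outside} $WA=\mathrm{Im}\,\Phi$. Indeed, if $y=\Phi(x)$ with $x\in V$, then $\|ya\|=\|\Phi(xa)\|=\|xa\|$, and since $V$ is a Hilbert $A$-module the identical extension of Example~\ref{ex:id_Ban_ext} gives $\sup\{\|xa\|:a\in A,\|a\|\le 1\}=\|x\|=\|y\|$. Hence to violate~(\ref{eq:str_es_Ban_ext}) I must adjoin to the module a direction on which $A$ acts trivially, and the whole content is to do this while keeping $A$ strictly essential in $\mathcal{B}$ and preserving condition (iv) of Definition~\ref{dfn: Ban_ext}.

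First I would take $V=A$ (a Hilbert $A$-module over itself) and $\mathcal{B}=LM(A)$; by Theorem~\ref{teo:left_mult1} the ideal $A$ is left strictly essential in $\mathcal{B}$, so~(\ref{eq:str_es_ext_banalg}) holds, and non-unitality of $A$ guarantees $\mathcal{B}\supsetneq A$, making this a genuine extension. Next I set $W:=\mathcal{B}\oplus\mathbb{C}$ with norm $\|(b,\lambda)\|:=\max\{\|b\|,|\lambda|\}$, right $\mathcal{B}$-action $(b,\lambda)\cdot c:=(bc,0)$, and embedding $\Phi:A\to W$, $\Phi(a):=(a,0)$. The verifications are routine: associativity is immediate and $\|(b,\lambda)c\|=\|bc\|\le\|(b,\lambda)\|\,\|c\|$, so $W$ is a right Banach $\mathcal{B}$-module; $\Phi$ is an $A$-linear isometry; and since $A\subseteq\mathcal{B}A\subseteq A$ with $\overline{A\cdot A}=A$ (via an approximate unit), the closed span of $\{(ba,0):b\in\mathcal{B},a\in A\}$ equals $A\oplus 0=\mathrm{Im}\,\Phi$, which is exactly condition (iv). Thus $(W,\mathcal{B},\Phi)$ is a Banach extension of $V$ with $A$ strictly essential in $\mathcal{B}$.

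Finally I would read off the failure of~(\ref{eq:str_es_Ban_ext}) at the distinguished element $y_0:=(0,1)\in W$: here $\|y_0\|=1$, whereas $y_0\cdot a=(0\cdot a,0)=0$ for every $a\in A$, so $\sup\{\|y_0 a\|:a\in A,\|a\|\le 1\}=0\ne 1=\|y_0\|$. This settles the construction for every non-unital $A$.

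I do not expect a serious obstacle, since the example is completely explicit and the checks are bookkeeping against Definition~\ref{dfn: Ban_ext}. The one genuinely structural point---and the conceptual heart of the contrast with the preceding theorem---is that in the $C^*$-case the module norm is recovered from the inner product as $\|y\|=\|\langle y,y\rangle^{1/2}\|$ with $\langle y,y\rangle^{1/2}\in B$, so the algebra strict essentiality of $A$ in $B$ transfers to the module via $\|ya\|=\|\langle y,y\rangle^{1/2}a\|$; in the Banach setting there is no such inner-product device, and the trivial summand $\mathbb{C}$ supplies an element whose norm is entirely invisible to the action of $A$. Isolating such an element is therefore the only idea the proof needs.
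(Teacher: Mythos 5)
Your construction is correct: with $\mathcal{B}=LM(A)$, $W=\mathcal{B}\oplus\mathbb{C}$ carrying the max-norm, the action $(b,\lambda)\cdot c=(bc,0)$ and $\Phi(a)=(a,0)$, all four conditions of Definition~\ref{dfn: Ban_ext} hold (in particular $WA=A\oplus 0={\rm Im}\,\Phi$ via an approximate unit), $A$ is left strictly essential in $LM(A)$ by Theorem~\ref{teo:left_mult1}, and (\ref{eq:str_es_Ban_ext}) visibly fails at $y_0=(0,1)$. However, your route is genuinely different from the paper's. The paper takes $V=A$, $\mathcal{B}=A$ and $W=\widetilde{A}$, the unitization equipped with the Banach (non-$C^*$) norm $\|(a,\lambda)\|=\|a\|+|\lambda|$, with $\Phi$ the canonical embedding; there the failure of (\ref{eq:str_es_Ban_ext}) is a norm-inflation phenomenon --- the $\ell^1$-type norm strictly dominates the norm induced by the action on $A$ (cited from \cite[Lemma 8]{Pav1}) --- and \emph{not} the presence of a direction annihilated by $A$: in that example the submodule $W_0$ of (\ref{eq:es_ban_mod}) is zero, since $(a,\lambda)c=0$ for all $c\in A$ would force $-a/\lambda$ to be a unit of the non-unital algebra $A$. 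This difference matters in two ways. First, the paper's example also establishes the stronger Remark~\ref{rk:es_not_str_es_B_e} (failure of (\ref{eq:str_es_Ban_ext}) even when $W_0=0$), which your example, having $y_0\in W_0\setminus\{0\}$, cannot. Second, your motivating claim that to violate (\ref{eq:str_es_Ban_ext}) you ``must adjoin a direction on which $A$ acts trivially'' is in fact refuted by the paper's example; your opening computation only justifies the weaker statement that the failure must occur outside ${\rm Im}\,\Phi$. This overstatement is harmless for the theorem as stated, since your construction stands on its own. What your approach buys is a completely self-contained verification, with the failure read off in one line rather than imported from \cite{Pav1}; also note that the choice $\mathcal{B}=LM(A)$ is unnecessary --- $\mathcal{B}=A$ already works, as strict essentiality of $A$ in itself is automatic and the theorem does not require $\mathcal{B}\supsetneq A$.
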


\begin{proof} Let us take into consideration any non-unital
$C^*$-algebra $A$ and put $V=A$, $\mathcal{B}=A$. Further let us
choose $W=\widetilde{A}$, where $\widetilde{A}$ is the
$C^*$-algebra with an adjoint unit equipped with the (Banach, but
not $C^*$-) norm
\begin{gather*}
\|(a,\lambda)\|=\|a\|+|\lambda|,\qquad a\in A, \lambda\in
\mathbb{C}.
\end{gather*}
Then $A$ will be a left essential, but not left strictly essential
ideal of $\widetilde{A}$ (see~\cite[Lemma 8]{Pav1}). Let us choose
the map $\Phi = {\it i}$ to be the canonical embedding of $A$ into
$\widetilde{A}$. Then the condition~{\rm
(\ref{eq:str_es_Ban_ext})} does not hold for the Banach extension
$(\widetilde{A}, A, i)$.
\end{proof}

Let $(W,\mathcal{B},\Phi)$ be a Banach extension of a Hilbert
$A$-module $V$. Let us define a closed $\mathcal{B}$-submodule
$W_0$ of $W$ by the formula
\begin{gather}\label{eq:es_ban_mod}
    W_0=\{y\in W : ya=0 \quad\text{for all}\quad a\in A\}.
\end{gather}
Then the assertion $W_0=\{ 0 \}$ would be a reasonable analogue
to the condition (ii) of Definition~\ref{dfn:left_es_id}. Let us
remark in addition that $W_0=\{ 0 \}$ holds for any strict essential
Banach extensions.

\begin{rk}\rm\label{rk:es_not_str_es_B_e}
We can a bit strengthen the result of
Theorem~\ref{prop:es_not_str_es_B_e}. More precisely, the example
$(\widetilde{A},A,i)$ of a Banach extension with a non-unital
$C^*$-algebra $A$ from the proof of Theorem~\ref{prop:es_not_str_es_B_e}
shows that there are a Hilbert $A$-module $V$ and its Banach extension
$(W,\mathcal{B},\Phi)$ such that $A$ is a strictly essential ideal
of $\mathcal{B}$ and $W_0=0$, but the condition~(\ref{eq:str_es_Ban_ext})
does not hold.
\end{rk}

Finally, let us discuss one question, which was formulated by
D.~Baki\'c. In~\cite{Bakic} the set of multipliers of a Hilbert
$A$-module $V$ was defined as the Hilbert $M(A)$-module $V_d=M(V):=
{\rm Hom}^*(A,V)$. Then the question was raised whether it is an
admissible situation, when Hilbert modules $V_1$ and $V_2$ over a
non-unital $C^*$-algebra are not isomorphic, but their modules of
multipliers are isomorphic. Let us demonstrate by example
that the answer on this question is affirmative.

\begin{ex}\rm
Let $A$ be the $C^*$-algebra $K(H)$ of all compact operators on a
separable Hilbert space $H$, and let $B$ be the $C^*$-algebra $B(H)$
of all bounded linear operators on $H$. Consider the $C^*$-algebra
$C$ and two Hilbert $C$-modules $V_1$ and $V_2$ defined by
\[
   C = \left( \begin{array}{cc} A & 0 \\ 0 & B \end{array} \right) \, ,
   V_1 = \left( \begin{array}{cc} A & 0 \\ 0 & B \end{array} \right) \, ,
   V_2 = \left( \begin{array}{cc} A & 0 \\ 0 & A \end{array} \right) \, .
\]
Then $V_1$ and $V_2$ are not isomorphic as Hilbert $C$-modules,
because the first one is a full Hilbert $C$-module, but the second
one is not. At the same time both their sets of (two-sided, left)
multipliers can be described by the Hilbert $M(C)$-module
\[
    M(V_1)=M(V_2)=LM(V_1)=LM(V_2) =
    \left( \begin{array}{cc} B & 0 \\ 0 & B \end{array} \right) \, .
\]
\end{ex}

\section{Essential and strict essential extensions of matrix algebras}

The aim of the present section is to check to which extend an
essential extension of a $C^*$-algebra $A$ to a $*$-algebra
$\mathbb B$ preserves the property of $\mathbb B$ to be Banach
with respect to the induced norm (\ref{eq:str_es_ext_banalg})
for any of its finite matricial extensions $M_n(\mathbb B)$,
$n \ge 2$, with $M_n(A) \subseteq M_n(\mathbb B)$ being an
essential extension of $M_n(A)$, and vice versa.
Note, that Definition \ref{dfn:left_es_id} of a left essential ideal
is formulated for algebraic representations of the $*$-algebra $A$
in $*$-algebras $\mathbb B$ without any reference to topologies on
both these algebras.

\begin{lem}
  Let $A$ be a $C^*$-algebra and $\mathcal{B}$ be a
  Banach algebra. Then the following conditions are equivalent:
  \begin{enumerate}
    \item $A$ is a left essential ideal of $\mathcal{B}$;
    \item $M_n(A)$ is a left essential ideal of $M_n(\mathcal{B})$ for
    any integer $n\ge 2$.
  \end{enumerate}
\end{lem}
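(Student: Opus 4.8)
The plan is to reduce everything to the purely algebraic characterization (ii) of Definition~\ref{dfn:left_es_id}, namely that a left ideal is left essential exactly when its left annihilator vanishes. Following that definition, write
\[
    \mathcal{B}_0 = \{b\in \mathcal{B} : ba=0 \text{ for all } a\in A\},
\]
and analogously set $(M_n(\mathcal{B}))_0 = \{X\in M_n(\mathcal{B}) : XY=0 \text{ for all } Y\in M_n(A)\}$. Since the essential-ideal property makes no reference to the norms (as the paper already emphasizes), no topological input is needed and the whole task becomes a computation of these annihilators. First I would record the trivial compatibility of the setup: if $\mathcal{B}A\subseteq A$, then for $X=(x_{ij})\in M_n(\mathcal{B})$ and $Y=(y_{ij})\in M_n(A)$ each entry $(XY)_{ij}=\sum_m x_{im}y_{mj}$ lies in $\mathcal{B}A\subseteq A$, so $M_n(A)$ is indeed a left ideal of $M_n(\mathcal{B})$ and the statement is meaningful.

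The key step is to identify $(M_n(\mathcal{B}))_0$ entrywise by testing against elementary matrices. Since $M_n(A)$ is the linear span of the matrices $aE_{kl}$ (with $a\in A$ in position $(k,l)$ and zeros elsewhere), and $Y\mapsto XY$ is linear, the condition $XY=0$ for all $Y\in M_n(A)$ is equivalent to $X(aE_{kl})=0$ for all $a\in A$ and all $k,l$. A direct computation gives
\[
    \bigl(X(aE_{kl})\bigr)_{ij}=x_{ik}\,a\,\delta_{jl},
\]
so this vanishes for all indices precisely when $x_{ik}a=0$ for every $i,k$ and every $a\in A$. Letting $k$ range over all of $1,\dots,n$ forces the annihilator condition on every column of $X$, whence $X\in(M_n(\mathcal{B}))_0$ if and only if each entry $x_{ik}\in \mathcal{B}_0$; that is,
\[
    (M_n(\mathcal{B}))_0 = M_n(\mathcal{B}_0).
\]

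With this identity in hand the equivalence is immediate. Since $M_n$ of a set is zero exactly when the set is zero, we have $M_n(\mathcal{B}_0)=0$ if and only if $\mathcal{B}_0=0$. Hence $A$ is a left essential ideal of $\mathcal{B}$ (i.e.\ $\mathcal{B}_0=0$) if and only if $M_n(A)$ is a left essential ideal of $M_n(\mathcal{B})$ (i.e.\ $(M_n(\mathcal{B}))_0=0$), and this holds simultaneously for every $n\ge 2$; for the implication (ii)$\Rightarrow$(i) a single value such as $n=2$ already suffices. I do not expect a genuine obstacle here: the argument is entirely algebraic, and the only point requiring a little care is the bookkeeping with the matrix units that yields the displayed identity $(M_n(\mathcal{B}))_0=M_n(\mathcal{B}_0)$, after which both directions drop out at once.
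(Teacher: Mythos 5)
Your proof is correct and follows essentially the same route as the paper: the paper's entire argument is the observation that $M_n(\mathcal{B}_0)=M_n(\mathcal{B})_0$ under the notation of Definition~\ref{dfn:left_es_id}, which is exactly the identity you establish (with the matrix-unit bookkeeping the paper leaves implicit) before concluding via $M_n(\mathcal{B}_0)=0 \Leftrightarrow \mathcal{B}_0=0$.
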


\begin{proof} Observe that $M_n(\mathcal{B}_0)=M_n(\mathcal{B})_0$
for any integer $n\ge 2$ under the notations of
Definition~\ref{dfn:left_es_id}. Consequently, the assertion holds.
\end{proof}

\begin{teo}
  Let $A$ be a $C^*$-algebra and $\mathcal{B}$ be a normed
  algebra with involution containing $A$ as a left essential
  ideal. Then the following conditions are equivalent:
  \begin{enumerate}
    \item The algebra $\mathcal{B}$ is Banach with respect
    to the induced by the essential extension of $A$
    norm~{\rm(\ref{eq:str_es_ext_banalg})}.
    \item The algebra $M_n(\mathcal{B})$ is Banach with respect
    to the induced by the essential extension of $M_n(A)$
    norm~{\rm(\ref{eq:str_es_ext_banalg})} for any integer $n\ge 2$.
  \end{enumerate}
\end{teo}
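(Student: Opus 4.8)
The plan is to notice that both conditions concern only \emph{completeness}, and to reduce the whole statement to a single norm-equivalence on the underlying vector space of $M_n(\mathcal B)$. Write $\|\cdot\|_{\mathcal B}$ for the norm on $\mathcal B$ defined by~(\ref{eq:str_es_ext_banalg}), and $\|\cdot\|_{(n)}$ for the norm on $M_n(\mathcal B)$ defined by the same formula relative to the ideal $M_n(A)$; this latter norm is legitimate because $M_n(A)$ is a left essential ideal of $M_n(\mathcal B)$ by the previous lemma, and $M_n(A)$ is a $C^*$-algebra in its canonical norm, which we denote $\|\cdot\|$. Identifying $M_n(\mathcal B)$ with $\mathcal B^{\,n^2}$ as a vector space, I would prove that $\|\cdot\|_{(n)}$ is equivalent to the maximum-of-entries norm $B=(b_{ij})\mapsto\max_{i,j}\|b_{ij}\|_{\mathcal B}$. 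Once this is in hand, the conclusion is immediate from two standard facts: a finite product of normed spaces is complete iff each factor is, and equivalent norms have the same Cauchy and convergent sequences. Hence $(M_n(\mathcal B),\|\cdot\|_{(n)})$ is Banach iff $(\mathcal B,\|\cdot\|_{\mathcal B})$ is Banach, which is exactly the asserted equivalence of (i) and (ii).

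For the lower estimate $\max_{i,j}\|b_{ij}\|_{\mathcal B}\le\|B\|_{(n)}$ I would test $B$ against the single-entry matrices $a\,e_{lm}\in M_n(A)$, where $a\in A$ sits in position $(l,m)$. A direct computation in $M_n(A)$ gives $\|a\,e_{lm}\|=\|a\|$ and shows that the product $B\cdot(a\,e_{lm})$ has a single nonzero column, with entries $b_{1l}a,\dots,b_{nl}a$, whose $C^*$-norm equals $\big\|\sum_i a^*b_{il}^*b_{il}a\big\|^{1/2}$. As every summand is positive, this dominates $\|b_{kl}a\|$ for each fixed $k$; taking the supremum over $\|a\|\le 1$ yields $\|b_{kl}\|_{\mathcal B}\le\|B\|_{(n)}$ for all $k,l$, as required.

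For the upper estimate I would bound $\|B\mathbf A\|$ for an arbitrary $\mathbf A=(a_{pj})\in M_n(A)$ with $\|\mathbf A\|\le 1$, using three ingredients: the triangle inequality $\|X\|\le\sum_{i,j}\|x_{ij}\|$ valid in any matrix $C^*$-algebra; the contractivity $\|ba\|\le\|b\|_{\mathcal B}\,\|a\|$ built into~(\ref{eq:str_es_ext_banalg}); and the elementary domination of each entry of a matrix over a $C^*$-algebra by the matrix itself, $\|a_{pj}\|\le\|\mathbf A\|\le 1$. Since $(B\mathbf A)_{ij}=\sum_p b_{ip}a_{pj}$, combining these gives $\|B\mathbf A\|\le\sum_{i,j,p}\|b_{ip}\|_{\mathcal B}\,\|a_{pj}\|\le n^3\max_{i,j}\|b_{ij}\|_{\mathcal B}$, whence $\|B\|_{(n)}\le n^3\max_{i,j}\|b_{ij}\|_{\mathcal B}$ and the two-sided equivalence of norms is complete.

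The routine parts are the Banach-space folklore invoked at the end and the triangle-type estimate in the upper bound. The main obstacle, such as it is, lies in the two $C^*$-algebraic computations inside $M_n(A)$: identifying the norm of a single-column matrix with the Hilbert-module norm of its column (for the lower bound), and the entrywise domination $\|a_{pj}\|\le\|\mathbf A\|$ (for the upper bound). Both follow by passing to a faithful representation $A\hookrightarrow B(H)$ and computing with the amplified action of $M_n(A)$ on $H^{\oplus n}$; the only point needing attention is that, since $A$ may be non-unital, the scalar matrix units need not belong to $M_n(A)$, so they are used purely to select components of vectors in $H^{\oplus n}$ and never as elements of the algebra.
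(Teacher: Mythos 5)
Your proof is correct and takes essentially the same route as the paper: your two estimates (the entry-wise lower bound obtained by testing against single-entry matrices, and the $n^3\max$ upper bound obtained from the sum-of-entries inequality plus the contractivity $\|ba\|\le\|b\|_{\mathcal B}\|a\|$) are precisely the paper's inequalities (\ref{eq:est_matr_str_es1}) and (\ref{eq:est_matr_str_es2}). The only difference is one of packaging: you state the two bounds as an equivalence between $\|\cdot\|_{(n)}$ and the maximum-of-entries norm and then invoke standard facts about completeness under equivalent norms and finite products, whereas the paper proves the same transfer of completeness by running the Cauchy-sequence arguments explicitly in both directions (including the embedding of a Cauchy sequence of $\mathcal B$ into the $(1,1)$-corner of $M_n(\mathcal B)$ for the converse).
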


\begin{proof} We will denote by $\|\cdot\|$  the
norm~(\ref{eq:str_es_ext_banalg}) for elements either from
$\mathcal{B}$ or from $M_n(\mathcal{B})$. For the convenience of
the reader let us remind the following well known inequalities
(cf.~\cite[Remark 3.4.1]{Murphy})
\begin{gather}
    \|a_{i,j}\|\le \|a\|\qquad (i,j=1,\dots,n)\label{eq:matr_est1}\\
      \|a\|\le\sum_{i,j=1}^n \|a_{i,j}\|\label{eq:matr_est2}
\end{gather}
which hold for any $a=(a_{i,j})\in M_n(A)$.

To begin with, let us prove that (i) implies (ii). For
$b=(b_{i,j})$ from $M_n(\mathcal{B})$ we have the following
estimates:
\begin{eqnarray}\label{eq:est_matr_str_es1}
    \|b\|
    & \ge &  \sup\left\{ \left\|\sum_{k=1}^n b_{i,k}a_{k,r}\right\| :
             \|(a_{i,j})\|\le 1, (a_{i,j})\in M_n(A)\right\} \\
    & \ge &  \sup\{\|b_{i,j}a_{j,r}\| : \|a_{j,r}\|\le 1, a_{j,r}\in
             A\} \nonumber \\
    & = &    \|b_{i,j}\|\nonumber
\end{eqnarray}
for all $1\le i,j\le n$ by (\ref{eq:matr_est1}).
Therefore, any Cauchy sequence $\{ b^{(N)}=(b^{(N)}_{i,j}): N \in
\mathbb{N} \}$ from $M_n(\mathcal{B})$ engenders Cauchy
sequences $\{ b^{(N)}_{i,j} \}$ from $\mathcal{B}$ for any pair $(i,j)$
with $1\le i,j\le n$. Let us denote the limits of the sequences
$\{ b^{(N)}_{i,j}: N \in \mathbb{N} \}$ in $\mathcal{B}$ by $b_{i,j}$
for any pair $(i,j)$, i.e.
\begin{gather*}
    \lim_{N \to \infty} \sup\left\{\left\|\left(b_{i,j}^{(N)}-
    b_{i,j}\right)a\right\| : \|a\|\le 1, a\in A\right\}=0 \, ,
\end{gather*}
and let $b=(b_{i,j})$ denote a corresponding matrix from
$M_n(\mathcal{B})$. Then, taking into
consideration~(\ref{eq:matr_est2}), we deduce
\begin{eqnarray}
\lefteqn{
    \left\|b-b^{(N)}\right\|\le \sup\left\{\sum_{i,j=1}^n
    \left\|\sum_{k=1}^n\left(b_{i,k}-b_{i,k}^{(N)}\right) a_{k,j}\right\| :
    \|(a_{k,j})\|\le 1, (a_{k,j})\in M_n(A)\right\}\le} \nonumber \\
& \le &
\sum_{i,j=1}^n\sum_{k=1}^n\sup\left\{\left\|\left(b_{i,k}-b_{i,k}^{(N)}
        \right)a_{k,j}\right\| : \|(a_{k,j})\|\le 1, (a_{k,j})\in M_n(A)\right\}
        \label{eq:est_matr_str_es2}\\
& = &   \sum_{i,j=1}^n\sum_{k=1}^n\sup\left\{\left\|\left(b_{i,k}-b_{i,k}^{(N)}
        \right)a_{k,j}\right\| : \|a_{k,j}\|\le 1, a_{k,j}\in A\right\} \nonumber \\
& = &   \sum_{i,k=1}^n n\|b_{i,k}-b_{i,k}^{(N)}\| \, . \nonumber
\end{eqnarray}
Therefore the sequence $\{ b^{(N)} \}$ converges to $b$ with respect
to the norm and, hence, the space $M_n(\mathcal{B})$ is complete.

Now we have to verify that (ii) implies (i). Let us consider
any Cauchy sequence $\{ b^{(N)}: N \in \mathbb N \}$ of $\mathcal{B}$
and define a corresponding sequence of matrices $\{ \widetilde{b}^{(N)}\}$
of $M_n(\mathcal{B})$ where the element at position $(1,1)$ of the
respectively derived matrix equals to $b^{(N)}$ and all the other elements
of the matrices are equal to zero. Then
inequality~(\ref{eq:est_matr_str_es2}) is the reason why
\begin{gather*}
    \|\widetilde{b}^{(N)}\|\le n\|b^{(N)}\|
\end{gather*}
for any $N \in \mathbb N$. Denote the limit of the sequence $\{
\widetilde{b}^{(N)}\}$ in $M_n(\mathcal{B})$ by
$\widetilde{b}=(\widetilde{b}_{i,j})$. Immediately
(\ref{eq:est_matr_str_es1}) implies that, firstly,
$\widetilde{b}_{i,j}=0$ if $(i,j)\neq (1,1)$ and, secondly, the
sequence $\{ b^{(N)} \}$ converges to $\widetilde{b}_{1,1}$.
\end{proof}

Finally, considering the particular case of maximal left strict
essential extensions of matrix algebras $M_n(A)$ for
$C^*$-algebras $A$ one obtains the identification
$LM(M_n(A))\simeq M_n(LM(A))$ for any integer $n \ge 1$. The
equality may be verified using the strict topology approach to
left multipliers of $C^*$-algebras (see~\cite{Wegge-Olsen} for
details).

\section{Left strict topology and left multipliers}

Essential left extensions of $C^*$-algebras are strongly
interrelated with some kind of topological closures of the
embedded copy of the extended $C^*$-algebra, where these
left strict topologies are generated by certain sets of
semi-norms. We are going to look for analogous sets of semi-norms
for essential extensions of $C^*$-algebras of bounded $C^*$-linear
operators on Hilbert $C^*$-modules and for essential extensions
of Hilbert $C^*$-modules.

Before we discuss an approach to the left multipliers of Hilbert
$C^*$-modules which is connected with the notion of a certain left
strict topology let us introduce the analogue of the one for the
left $(E,C,\rho)$-multipliers of a $C^*$-algebra $A$
(see~Definition~\ref{dfn:left_mult}).

\begin{dfn}\rm
Let $(E,C,\rho)$ be an admissible for $A$ triple. The left strict
topology on ${\rm End}_C(E)$ is defined
by the family of semi-norms
\begin{gather}\label{eq:ls_top_alg}
    \{\nu_a\}_{a\in A},\quad\text{where}\quad
    \nu_a(T)=\|T\rho(a)\|,\, T\in {\rm End}_C(E).
\end{gather}
We will denote this topology by $l.s.$
\end{dfn}

\begin{prop}
   Let $(E,C,\rho)$ be an admissible for a $C^*$-algebra $A$
   triple and $\mathcal{B}$ be a Banach subalgebra of ${\rm
   End}_C(E)$ containing $\rho(A)$ as a left ideal. The following
   conditions are equivalent:
   \begin{enumerate}
      \item the left strict topology on $\mathcal{B}$ is Hausdorff;
      \item $\rho(A)$ is an essential ideal of $\mathcal{B}$.
   \end{enumerate}
   In particular, the set of left $(E,C,\rho)$-multipliers of $A$
   equipped with the left strict topology is a Hausdorff space.
\end{prop}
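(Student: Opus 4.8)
The plan is to reduce the equivalence to the classical fact that a locally convex topology generated by a family of seminorms $\{p_i\}$ is Hausdorff if and only if that family is \emph{separating}, i.e.\ the origin is the only point at which every $p_i$ vanishes. Applied to the left strict topology on $\mathcal{B}$, generated by the seminorms $\nu_a(T)=\|T\rho(a)\|$, this criterion says that the topology is Hausdorff precisely when $\{T\in\mathcal{B} : \nu_a(T)=0 \text{ for all } a\in A\}=\{0\}$. I would state this criterion explicitly (with a standard reference) and take it as the backbone of the whole argument.

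Next I would translate the vanishing of the seminorms. Since $\|\cdot\|$ is a genuine norm on $\mathrm{End}_C(E)$, one has $\nu_a(T)=0$ if and only if $T\rho(a)=0$; hence the common zero set of the seminorms is exactly $\{T\in\mathcal{B} : T\rho(a)=0 \text{ for all } a\in A\}$, which is nothing but the ideal $\mathcal{B}_0$ associated to the left ideal $\rho(A)\subset\mathcal{B}$ in condition~(ii) of Definition~\ref{dfn:left_es_id}. That definition tells us $\mathcal{B}_0=\{0\}$ is equivalent to $\rho(A)$ being a left essential ideal of $\mathcal{B}$. Combining this with the separating-seminorms criterion gives (i)$\Leftrightarrow$(ii) at once: the two conditions are, after unwinding, the same statement, so there is no substantial obstacle in this half.

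For the final assertion I would specialize the equivalence to $\mathcal{B}=LM_{(E,C,\rho)}(A)$. The point requiring care here is to verify that this algebra actually satisfies the standing hypotheses of the proposition: it is a subalgebra of $\mathrm{End}_C(E)$, it contains $\rho(A)$ since $\rho(a)\rho(b)=\rho(ab)\in\rho(A)$, and $\rho(A)$ is a left ideal in it because $T\rho(A)\subset\rho(A)$ is the very defining property of $LM_{(E,C,\rho)}(A)$. Granting this, Theorem~\ref{teo:left_mult1} exhibits $LM_{(E,C,\rho)}(A)$ as a left strictly essential extension of $A$, so $\rho(A)$ is a left strictly essential ideal and therefore, as remarked after the definition of such ideals, a left essential one. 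Thus condition~(ii) holds, and the equivalence yields that the left $(E,C,\rho)$-multipliers carry a Hausdorff left strict topology. I expect the only delicate step to be this hypothesis-checking together with a clean invocation of the seminorm--Hausdorff criterion; the algebraic core is the single identification of $\mathcal{B}_0$ with the common kernel of the $\nu_a$.
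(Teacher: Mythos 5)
Your proof is correct and takes essentially the same route as the paper: the paper's one-line argument is exactly your identification of condition (ii) of Definition~\ref{dfn:left_es_id} (the ideal $\mathcal{B}_0$ vanishing) with the statement that the seminorm family $\{\nu_a\}$ separates points of $\mathcal{B}$, combined with the standard fact that a seminorm-generated locally convex topology is Hausdorff if and only if the family is separating. Your extra hypothesis-checking for $LM_{(E,C,\rho)}(A)$ and the appeal to Theorem~\ref{teo:left_mult1} merely spell out the ``in particular'' claim that the paper leaves implicit, so no substantive difference in approach.
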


\begin{proof} The second condition of
Definition~\ref{dfn:left_es_id} is, obviously, equivalent to the
requirement that the system~(\ref{eq:ls_top_alg}) of semi-norms
separates points of $\mathcal{B}$.
\end{proof}

\begin{prop}\label{prop:A_cl_l.s.t}
   The set of all left $(E,C,\rho)$-multipliers $LM_{(E,C,\rho)}(A)$
   of $A$ is a closed space with respect to the left strict topology.
\end{prop}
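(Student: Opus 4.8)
The plan is to verify closedness through the standard net criterion: I take a net $\{T_\lambda\}$ in $LM_{(E,C,\rho)}(A)$ that converges to some $T\in {\rm End}_C(E)$ in the left strict topology, and I show that $T$ is again a left $(E,C,\rho)$-multiplier. Since the family of semi-norms~(\ref{eq:ls_top_alg}) need not be countable when $A$ is non-separable, the $l.s.$ topology is in general not metrizable, so I would work with nets rather than sequences.

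First I would unwind the meaning of $l.s.$-convergence. By the definition of the semi-norms $\nu_a(T)=\|T\rho(a)\|$, the convergence $T_\lambda\to T$ in the left strict topology says precisely that $\nu_a(T_\lambda-T)=\|(T_\lambda-T)\rho(a)\|\to 0$ for every $a\in A$; equivalently, for each fixed $a\in A$ the net $T_\lambda\rho(a)$ converges to $T\rho(a)$ in the operator norm of ${\rm End}_C(E)$.

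Next comes the key observation. Because every $T_\lambda$ is a left multiplier, $T_\lambda\rho(a)\in\rho(A)$ for all $\lambda$ and all $a\in A$. Thus, for each fixed $a$, the element $T\rho(a)$ is an operator-norm limit of a net lying in $\rho(A)$. The single structural fact I need is that $\rho(A)$ is norm-closed in ${\rm End}_C(E)$: this holds because $\rho$ is a $*$-homomorphism of $C^*$-algebras and the image of such a homomorphism is a $C^*$-subalgebra, hence complete and hence closed; alternatively, since the triple is admissible the representation $\rho$ is faithful and therefore isometric, so $\rho(A)$ is a complete, thus closed, subspace of the $C^*$-algebra ${\rm End}^*_C(E)\subseteq {\rm End}_C(E)$. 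Consequently $T\rho(a)\in\rho(A)$ for every $a\in A$, i.e.\ $T\rho(A)\subseteq\rho(A)$, which is exactly the condition $T\in LM_{(E,C,\rho)}(A)$.

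The argument is short, and the only delicate point is conceptual rather than computational: the left strict topology yields norm convergence only after one multiplies by a fixed $\rho(a)$, pointwise over $a\in A$, but this is precisely matched to the defining condition of $LM_{(E,C,\rho)}(A)$, which is checked ``one $a$ at a time.'' The main obstacle, such as it is, is to recognize that the $C^*$-structure enters exactly once---through the norm-closedness of $\rho(A)$---and that nets, not sequences, are needed to treat the general non-metrizable case.
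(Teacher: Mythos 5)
Your proof is correct and follows essentially the same route as the paper's: take a net in $LM_{(E,C,\rho)}(A)$ converging in the $l.s.$ topology, unwind the semi-norms to get norm convergence of $T_\lambda\rho(a)$ to $T\rho(a)$ for each fixed $a\in A$, and conclude $T\rho(a)\in\rho(A)$. You in fact supply the one detail the paper leaves implicit, namely that $\rho(A)$ is norm-closed in ${\rm End}_C(E)$ because the faithful $*$-homomorphism $\rho$ is isometric.
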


\begin{proof}
Let us take into consideration any net $\{T_\alpha\}$ from
$LM_{(E,C,\rho)}(A)$ converging to $T\in {\rm End}_C(E)$ with
respect to the left strict topology. It means that the net
$\{T_\alpha\rho(a)\}$ from $\rho(A)$ converges to $T\rho(a)$ with
respect to norm for any $a\in A$. Therefore $T\rho(a)$ belongs to
$\rho(A)$ for any $a \in A$ and, hence, $T$ belongs to
$LM_{(E,C,\rho)}(A)$.
\end{proof}

\begin{prop}
   The set of all left $(E,C,\rho)$-multipliers $LM_{(E,C,\rho)}
   (A)$ of $A$ coincides with the closure
   $\overline{\rho(A)}^{\, l.s.}$ of $\rho(A)$ inside ${\rm End}_C(E)$
   with respect to the left strict topology.
\end{prop}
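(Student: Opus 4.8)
The plan is to prove the stated set equality by establishing the two inclusions separately, using the closedness result of Proposition~\ref{prop:A_cl_l.s.t} for one direction and an explicit approximating net built from an approximate identity of $A$ for the other.

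For the inclusion $\overline{\rho(A)}^{\,l.s.}\subseteq LM_{(E,C,\rho)}(A)$ I would first observe that $\rho(A)\subseteq LM_{(E,C,\rho)}(A)$: since $\rho$ is a homomorphism, $\rho(a)\rho(a')=\rho(aa')\in\rho(A)$, so every $\rho(a)$ maps $\rho(A)$ into $\rho(A)$ and hence lies in $LM_{(E,C,\rho)}(A)$ by Definition~\ref{dfn:left_mult}. Because $LM_{(E,C,\rho)}(A)$ is closed in the left strict topology by Proposition~\ref{prop:A_cl_l.s.t}, the $l.s.$-closure of the subset $\rho(A)$ is contained in it, which is exactly the desired inclusion.

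The content of the proposition is the reverse inclusion $LM_{(E,C,\rho)}(A)\subseteq\overline{\rho(A)}^{\,l.s.}$. Here I would fix $T\in LM_{(E,C,\rho)}(A)$, choose an approximate identity $\{e_\lambda\}$ of $A$, and consider the net $T_\lambda:=T\rho(e_\lambda)$. By the very definition of $LM_{(E,C,\rho)}(A)$ one has $T\rho(A)\subseteq\rho(A)$, and since $\rho(e_\lambda)\in\rho(A)$ each $T_\lambda$ lies in $\rho(A)$. It then remains to verify that $T_\lambda\to T$ in the left strict topology, i.e.\ that $\nu_a(T-T_\lambda)\to 0$ for every $a\in A$. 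Using that $\rho$ is a contractive homomorphism and that $T$ is bounded (as $T\in{\rm End}_C(E)$), I would estimate
\begin{gather*}
    \nu_a(T-T_\lambda)=\|T\rho(a)-T\rho(e_\lambda)\rho(a)\|
    =\|T\rho(a-e_\lambda a)\|\le\|T\|\,\|a-e_\lambda a\|,
\end{gather*}
and the right-hand side tends to $0$ because $\{e_\lambda\}$ is an approximate identity, so $e_\lambda a\to a$ in norm. Hence $T$ is a left strict limit of elements of $\rho(A)$, completing the reverse inclusion and the proof.

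The argument is essentially routine once Proposition~\ref{prop:A_cl_l.s.t} is available; the only point requiring a little care is the choice of the approximating net. The naive guess $T_\lambda=\rho(e_\lambda)$ is not obviously $l.s.$-close to $T$, so the crux is to exploit the multiplier property $T\rho(A)\subseteq\rho(A)$ by multiplying $T$ on the \emph{right} by $\rho(e_\lambda)$: this both keeps the net inside $\rho(A)$ and produces the telescoping factor $\rho(a-e_\lambda a)$ that is annihilated by the approximate identity. I do not expect any serious obstacle beyond correctly identifying this net and checking the estimate for each seminorm $\nu_a$ individually, since uniformity over $a\in A$ is neither needed nor available.
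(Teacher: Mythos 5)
Your proposal is correct and follows essentially the same route as the paper: one inclusion from the closedness of $LM_{(E,C,\rho)}(A)$ (Proposition~\ref{prop:A_cl_l.s.t}), and the reverse inclusion by showing that the net $T\rho(e_\lambda)$, which lies in $\rho(A)$ by the multiplier property, converges to $T$ in the left strict topology via the estimate $\|T\rho(a)-T\rho(e_\lambda)\rho(a)\|\le\|T\|\,\|e_\lambda a-a\|$. The only (harmless) difference is that you spell out the easy containment $\rho(A)\subseteq LM_{(E,C,\rho)}(A)$, which the paper leaves implicit.
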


\begin{proof}
Because of  Proposition~\ref{prop:A_cl_l.s.t} it remains to check
that $LM_{(E,C,\rho)}(A)\subset \overline{\rho(A)}^{\, l.s.}$.
Take an approximative identity $\{e_\alpha\}$ in $\rho(A)$. Then
for any $T\in LM_{(E,C,\rho)}(A)$ the net $\{Te_\alpha\} \in
\rho(A)$ converges to $T$ with respect to the left strict topology.
Indeed
\begin{gather*}
   \lim_\alpha\|Te_\alpha\rho(a)-T\rho(a)\|\le
   \lim_\alpha\|T\|\|e_\alpha\rho(a)-\rho(a)\|=0
\end{gather*}
for any $a\in A$.
\end{proof}

Now let us consider a Banach extension $(W,\mathcal{B},\Phi)$ of a
Hilbert $A$-module $V$. Then $V$ is isomorphic to $\Phi (V)$ and
${\rm Hom}_A (A,\Phi (V))$ is a Banach $LM(A)$-submodule of ${\rm
Hom}(A,W)$ with respect to the action~(\ref{eq:LM_action}).

\begin{dfn}\rm
The left strict topology on ${\rm Hom}_A(A,W)$ is defined by the
family of semi-norms
\begin{gather}\label{eq:ls_top_mod}
    \{\nu_a\}_{a\in A},\quad\text{where}\quad
    \nu_a(S)=\|Sa\|,\, S\in {\rm Hom}_A(A,W).
\end{gather}
Here we understand $A$ canonically embedded into $LM(A)$, and
$Sa$ means the result of the action~(\ref{eq:LM_action}).
We will denote this topology by $l.s.$
\end{dfn}

\begin{dfn}\rm\label{dfn:gen_left_mult_mod}
Let $(W,\mathcal{B},\Phi)$ be a Banach extension of a Hilbert
$A$-module $V$. The left
$(W,\mathcal{B},\Phi)$-multipliers of $V$ are defined as
\begin{gather*}
    LM_{(W,\mathcal{B},\Phi)}(V)={\rm Hom}_A(A,\Phi(V)) \, .
\end{gather*}
\end{dfn}

It is clear that left $(W,\mathcal{B},\Phi)$-multipliers of $V$
are isomorphic for all Banach extensions $(W,\mathcal{B},\Phi)$ of
$V$. Beside this, the previous Definition~\ref{dfn:left_mult_mod}
of left multipliers $LM(V)$ of $V$ is a special case of
Definition~\ref{dfn:gen_left_mult_mod} corresponding to the
identical Banach extension discussed as Example~\ref{ex:id_Ban_ext}.

In the sequel we will need the generalization $\Gamma_\Phi :
\Phi(V)\rightarrow LM_{(W,\mathcal{B},\Phi)}(V)$ of the map
(\ref{eq:def_Gamma}) which will be defined in the following way
\begin{gather}\label{eq:def_Gamma_Phi}
    (\Gamma_\Phi (y))(a):=ya, \qquad y\in \Phi(V), a\in A \, .
\end{gather}
This map is an isometric $A$-linear map.

\begin{prop}
   Let $(W,\mathcal{B},\Phi)$ be a Banach extension of a Hilbert
   $A$-module $V$. The following conditions are equivalent:
   \begin{enumerate}
      \item the left strict topology on ${\rm Hom}_A(A,W)$ is Hausdorff;
      \item the submodule $W_0$ of $W$ introduced
            in~{\rm (\ref{eq:es_ban_mod})} equals to zero.
   \end{enumerate}
   In particular, the set of all left
   $(W,\mathcal{B},\Phi)$-multipliers of $V$ equipped with the left
   strict topology is a Hausdorff space.
\end{prop}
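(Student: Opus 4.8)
The plan is to treat this exactly as the analogous algebra-level statement was treated: a topology generated by a family of semi-norms is Hausdorff if and only if that family separates points. Thus condition (i) is equivalent to the assertion that the joint null set $N=\{S\in{\rm Hom}_A(A,W):\nu_a(S)=0\text{ for all }a\in A\}$ reduces to $\{0\}$, and the whole task becomes the purely algebraic identification of $N$ with the annihilator submodule $W_0$ from~(\ref{eq:es_ban_mod}). This reduction is the same one that underlies the earlier propositions of this section, where essentiality was recast as separation of points.

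First I would unwind the semi-norm $\nu_a(S)=\|Sa\|$ of~(\ref{eq:ls_top_mod}). Using the action~(\ref{eq:LM_action}) together with the right $A$-linearity of $S$, one obtains $(Sa)(c)=S(ac)=S(a)c$ for every $c\in A$, whence $\nu_a(S)=\sup\{\|S(a)c\|:c\in A,\ \|c\|\le 1\}$. Consequently $\nu_a(S)=0$ holds precisely when $S(a)c=0$ for all $c$, that is, precisely when $S(a)\in W_0$. This shows $N=\{S:S(A)\subseteq W_0\}$, and it already yields the implication (ii)$\Rightarrow$(i): if $W_0=\{0\}$, then $S(A)=\{0\}$ forces $S=0$, so the semi-norms separate points and the left strict topology is Hausdorff.

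The converse implication is the step I expect to be the main obstacle. Here one must show that $W_0\neq\{0\}$ forces $N\neq\{0\}$, i.e.\ that a nontrivial annihilator submodule produces a nonzero $A$-linear bounded $S:A\to W$ with $S(A)\subseteq W_0$. The natural device to bring in is the canonical $A$-linear map $\gamma:W\to{\rm Hom}_A(A,W)$ defined by $\gamma(y)(a)=ya$, whose kernel is exactly $W_0$; the heart of the matter is to relate the joint null set $N$ inside ${\rm Hom}_A(A,W)$ to this kernel. The delicate point is that $W_0$ is itself annihilated by $A$, so that $A$-linearity interacts nontrivially with membership in $W_0$, and it is precisely this interaction that makes the correct identification of $N$ with $W_0$ the crux of the argument. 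I would carry this out by pinning down that correspondence through $\gamma$ rather than by an ad hoc construction.

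Finally, the concluding clause follows by specialising the equivalence to the submodule $\Phi(V)\cong V$. Since $V$ is a Hilbert $A$-module, the net $xe_\alpha$ converges in norm to $x$ for any approximate identity $\{e_\alpha\}$ of $A$ (Example~\ref{ex:id_Ban_ext}), so $xa=0$ for all $a\in A$ forces $x=\lim_\alpha xe_\alpha=0$; hence the annihilator submodule of $\Phi(V)$ is $\{0\}$, and the left strict topology on $LM_{(W,\mathcal{B},\Phi)}(V)={\rm Hom}_A(A,\Phi(V))$ is Hausdorff.
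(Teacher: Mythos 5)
Your reduction of (i) to the separation of points, your identification of the joint null set $N=\{S\in{\rm Hom}_A(A,W):S(A)\subseteq W_0\}$, your proof of (ii)$\Rightarrow$(i), and your treatment of the concluding clause are all correct. But the implication (i)$\Rightarrow$(ii) is never proved: you explicitly defer it to a hoped-for correspondence through $\gamma:W\to{\rm Hom}_A(A,W)$, $\gamma(y)(a)=ya$, and that device cannot work. For $y\in W_0$ one has $\gamma(y)(a)=ya=0$ for all $a$, i.e.\ $\gamma(y)=0$; so $\gamma$ annihilates precisely the submodule whose nontriviality you need to detect, and it can only ever produce the zero element of $N$. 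Worse, nothing else can produce a nonzero element of $N$ either: if $S(A)\subseteq W_0$, then $S(ac)=S(a)c=0$ for all $a,c\in A$, and since every element of $A$ is a norm-limit of products $ae_\alpha$ (approximate identity) while $S$ is bounded, it follows that $S=0$. Thus $N=\{0\}$ unconditionally: by your own (correct) computation, the semi-norms (\ref{eq:ls_top_mod}) separate the points of ${\rm Hom}_A(A,W)$ whether or not $W_0=0$, so the forward implication cannot be established along these lines at all.

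It is instructive to compare this with the paper, whose entire proof is the assertion that ``obviously'' $W_0=0$ if and only if the system (\ref{eq:ls_top_mod}) separates points of ${\rm Hom}_A(A,W)$. That tacit identification of the joint null set with $W_0$ is exactly the step your computation contradicts: it is valid for the semi-norms $y\mapsto\|ya\|$ on $W$ itself, whose joint kernel is $W_0$ by the very definition (\ref{eq:es_ban_mod}), but not for the induced semi-norms on ${\rm Hom}_A(A,W)$. Indeed the direction (i)$\Rightarrow$(ii) fails as printed: take $\mathcal{B}=A$, $W=V\oplus X$ with $X\neq 0$ a Banach space carrying the zero $A$-action, and $\Phi$ the embedding onto the first summand; this is a legitimate Banach extension (one checks $WA=\Phi(V)$), it has $W_0=0\oplus X\neq 0$, and yet ${\rm Hom}_A(A,W)={\rm Hom}_A(A,\Phi(V))$ carries a Hausdorff left strict topology by the argument above. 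So the obstacle you flagged is real, but it is not one you failed to overcome by lack of ingenuity — it is a defect of the statement, hidden behind the paper's one-line proof; a correct version would either place the left strict topology on $W$ itself via the semi-norms $\nu_a(y)=\|ya\|$, or replace (ii) by a condition on ${\rm Hom}_A(A,W)$. As a proof of the proposition as stated, your attempt is genuinely incomplete in the direction (i)$\Rightarrow$(ii), and no completion exists.
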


\begin{proof}
Obviously, $W_0$ equals zero if and only if the system
(\ref{eq:ls_top_mod}) of semi-norms separates points of
${\rm Hom}_A(A,W)$.
\end{proof}

\begin{teo}\label{prop:LM(V)_cl_l.s.t}
   The set of all left $(W,\mathcal{B},\Phi)$-multipliers
   $LM_{(W,\mathcal{B},\Phi)}(V)$ of $V$ is a closed space with
   respect to the left strict topology.
\end{teo}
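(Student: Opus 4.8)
The plan is to follow the pattern of the proof of Proposition~\ref{prop:A_cl_l.s.t}, translated from the algebra setting into the module setting. First I would take an arbitrary net $\{S_\alpha\}$ lying in $LM_{(W,\mathcal{B},\Phi)}(V)={\rm Hom}_A(A,\Phi(V))$ and converging with respect to the left strict topology to some $S\in {\rm Hom}_A(A,W)$, and I would unwind what this convergence means. By the definition of the semi-norms in~(\ref{eq:ls_top_mod}), $l.s.$-convergence of $\{S_\alpha\}$ to $S$ is exactly the assertion that $\|(S_\alpha-S)a\|\to 0$ for every $a\in A$, i.e.\ that $S_\alpha a\to Sa$ in the operator norm of ${\rm Hom}_A(A,W)$ for each fixed $a$. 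The goal is then to show that this forces ${\rm Im}\,S\subseteq \Phi(V)$, so that $S\in {\rm Hom}_A(A,\Phi(V))=LM_{(W,\mathcal{B},\Phi)}(V)$.

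The key structural fact I would isolate first is that $\Phi(V)$ is a norm-closed submodule of $W$: since $\Phi$ is an $A$-linear isometry (condition (iii) of Definition~\ref{dfn: Ban_ext}) and $V$ is complete, its image $\Phi(V)={\rm Im}\,\Phi$ is complete, hence closed in $W$. With this in hand the argument runs parallel to the algebra case. For fixed $a,c\in A$ the action~(\ref{eq:LM_action}) gives $(S_\alpha a)(c)=S_\alpha(ac)\in\Phi(V)$ because ${\rm Im}\,S_\alpha\subseteq\Phi(V)$; norm convergence $S_\alpha a\to Sa$ yields pointwise convergence $(S_\alpha a)(c)\to (Sa)(c)=S(ac)$, and closedness of $\Phi(V)$ gives $S(ac)\in\Phi(V)$ for all $a,c\in A$. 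Finally I would pass from these products to $S$ itself: choosing an approximative identity $\{e_\beta\}$ of $A$, the continuity of $S$ together with $ae_\beta\to a$ and the right $A$-linearity of $S$ give $S(a)=\lim_\beta S(ae_\beta)=\lim_\beta S(a)e_\beta$, and since each $S(a)e_\beta$ already lies in $\overline{WA}=\Phi(V)$, we conclude $S(a)\in\Phi(V)$ for every $a\in A$. Hence $S\in {\rm Hom}_A(A,\Phi(V))$, and the multiplier module is $l.s.$-closed.

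The step I expect to be the only genuine obstacle is the last one, namely transporting membership in $\Phi(V)$ from the ``smeared'' values $S(ac)$ to the values $S(a)$ of $S$ itself; in the algebra proof of Proposition~\ref{prop:A_cl_l.s.t} this is invisible, since there $T\rho(a)\in\rho(A)$ encodes the multiplier condition directly, whereas here I must insert the approximative-identity approximation $ae_\beta\to a$ and use right $A$-linearity. Everything else --- the reformulation of $l.s.$-convergence and the closedness of $\Phi(V)$ --- is routine. I would also note that the same approximative-identity computation shows that \emph{every} element of ${\rm Hom}_A(A,W)$ automatically has its image in $\overline{WA}=\Phi(V)$, so the closedness may alternatively be read off from the identification ${\rm Hom}_A(A,W)={\rm Hom}_A(A,\Phi(V))$; I would nevertheless keep the net formulation in order to stress the parallel with the preceding propositions.
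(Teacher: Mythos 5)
Your proof is correct and follows essentially the same route as the paper's: from $l.s.$-convergence you deduce $S(ac)\in\Phi(V)$ for all $a,c\in A$ and then pass to $S(a)\in\Phi(V)$ via an approximate identity and the norm-closedness of $\Phi(V)$ --- the paper compresses this last step into the single phrase that the membership of all $(Sa)(b)=S(ab)$ in $\Phi(V)$ ``implies that the image of $S$ belongs to $\Phi(V)$,'' which your argument makes explicit. Your closing remark is also sound: condition (iv) of Definition~\ref{dfn: Ban_ext} together with $A$-linearity and an approximate identity forces every $S\in{\rm Hom}_A(A,W)$ to take values in $\overline{WA}=\Phi(V)$, so $LM_{(W,\mathcal{B},\Phi)}(V)$ is in fact all of ${\rm Hom}_A(A,W)$ and the closedness is immediate.
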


\begin{proof}
Consider any net $\{S_\alpha\} \in LM_{(W,\mathcal{B},\Phi)}(V)$
converging to $S \in {\rm Hom}_A(A,W)$ with respect to the left strict
topology. This means that the net $\{S_\alpha a\}$ from ${\rm Hom}_A
(A,\Phi(V))$ converges to $S a$ with respect to the norm for any
$a\in A$. Therefore, $Sa$ belongs to ${\rm Hom}_A(A,\Phi(V))$ for any
$a \in A$ and, consequently, $(Sa)(b)=S(ab)$ belongs to $\Phi(V)$
for any $a,b\in A$. The latter implies that the image of $S$ belongs
to $\Phi(V)$, i.e. $S$ is an element of $LM_{(W,\mathcal{B},\Phi)}(V)$.
\end{proof}

\begin{teo}
   The set of all left $(W,\mathcal{B},\Phi)$-multipliers
   $LM_{(W,\mathcal{B},\Phi)}(V)$ of $V$ coincides with the closure
   $\overline{\Gamma_\Phi(\Phi(V))}^{\, l.s.}$ of the image
   of the map {\rm (\ref{eq:def_Gamma_Phi})} inside ${\rm
   Hom}_A(A,W)$ with respect to left strict topology.
\end{teo}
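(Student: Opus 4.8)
The plan is to prove the asserted equality by establishing the two set inclusions separately, following closely the pattern of the corresponding statement for the $(E,C,\rho)$-multipliers proved above. The inclusion $\overline{\Gamma_\Phi(\Phi(V))}^{\, l.s.}\subseteq LM_{(W,\mathcal{B},\Phi)}(V)$ comes essentially for free. By the very definition of $\Gamma_\Phi$ in~(\ref{eq:def_Gamma_Phi}), its image $\Gamma_\Phi(\Phi(V))$ is contained in $LM_{(W,\mathcal{B},\Phi)}(V)={\rm Hom}_A(A,\Phi(V))$. Since Theorem~\ref{prop:LM(V)_cl_l.s.t} asserts that $LM_{(W,\mathcal{B},\Phi)}(V)$ is closed in ${\rm Hom}_A(A,W)$ with respect to the left strict topology, the $l.s.$-closure of the smaller set $\Gamma_\Phi(\Phi(V))$ cannot leave $LM_{(W,\mathcal{B},\Phi)}(V)$, which gives this inclusion at once.

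For the reverse inclusion I would take any $S\in LM_{(W,\mathcal{B},\Phi)}(V)={\rm Hom}_A(A,\Phi(V))$ and fix an approximate identity $\{e_\alpha\}$ in $A$. The crucial observation is that each term $Se_\alpha$ of the net formed via the action~(\ref{eq:LM_action}) already lies in $\Gamma_\Phi(\Phi(V))$, and not merely in $LM_{(W,\mathcal{B},\Phi)}(V)$: using the $A$-linearity of $S$ one computes $(Se_\alpha)(a)=S(e_\alpha a)=S(e_\alpha)a=(\Gamma_\Phi(S(e_\alpha)))(a)$ for all $a\in A$, while $S(e_\alpha)\in\Phi(V)$ because $S$ takes values in $\Phi(V)$. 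Hence $Se_\alpha=\Gamma_\Phi(S(e_\alpha))\in\Gamma_\Phi(\Phi(V))$. It then remains to verify that $\{Se_\alpha\}$ converges to $S$ in the left strict topology, i.e. that $\nu_a(Se_\alpha-S)\to 0$ for every $a\in A$.

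This convergence reduces to the standard approximate-identity estimate. Unwinding the two actions, for $c\in A$ with $\|c\|\le 1$ one has $((Se_\alpha-S)a)(c)=S(e_\alpha ac)-S(ac)=S(e_\alpha a-a)c$, so that $\nu_a(Se_\alpha-S)=\|(Se_\alpha-S)a\|\le\|S(e_\alpha a-a)\|\le\|S\|\,\|e_\alpha a-a\|\to 0$, the last step holding since $\{e_\alpha\}$ is an approximate identity. This places $S$ in $\overline{\Gamma_\Phi(\Phi(V))}^{\, l.s.}$, and together with the first inclusion completes the proof. I do not expect a genuine obstacle here; the only point requiring care is the bookkeeping between the $LM(A)$-action of~(\ref{eq:LM_action}) and the evaluation maps, in particular the identification $Se_\alpha=\Gamma_\Phi(S(e_\alpha))$, which is exactly what forces the approximating net to lie inside the image of $\Gamma_\Phi$ rather than only inside $LM_{(W,\mathcal{B},\Phi)}(V)$.
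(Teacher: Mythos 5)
Your proposal is correct and takes essentially the same route as the paper: one inclusion follows from the closedness of $LM_{(W,\mathcal{B},\Phi)}(V)$ established in Theorem~\ref{prop:LM(V)_cl_l.s.t}, and the reverse inclusion is obtained by showing that the net $\Gamma_\Phi(S(e_\alpha))$ converges to $S$ in the left strict topology via the same approximate-identity bound $\|S\|\,\|e_\alpha a-a\|$. Your explicit bookkeeping (the check that $\Gamma_\Phi(\Phi(V))\subseteq LM_{(W,\mathcal{B},\Phi)}(V)$ and the identification $Se_\alpha=\Gamma_\Phi(S(e_\alpha))$) only makes precise what the paper's proof leaves implicit.
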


\begin{proof}
Because of  Theorem~\ref{prop:LM(V)_cl_l.s.t} it
remains to check that $LM_{(W,\mathcal{B},\Phi)}(V)\subseteq
\overline{\Gamma_\Phi(\Phi(V))}^{\, l.s.}$. Consider an approximative
identity $\{e_\alpha\}$ for $A$. Then for any $S \in
LM_{(W,\mathcal{B},\Phi)}(V)$ the net $\{\Gamma_\Phi(S(e_\alpha))\}$
converges with respect to the left strict topology to $S$. Indeed,
\begin{eqnarray*}
   \lim_\alpha\|(\Gamma_\Phi(S(e_\alpha))a)(b)-(Sa)(b)\|
     & = & \lim_\alpha\|(\Gamma_\Phi(S(e_\alpha))(ab)-S(ab)\|  \\
     & = &  \lim_\alpha\|S(e_\alpha)ab-S(ab)\|  \\
     & = & \lim_\alpha\|S(e_\alpha ab)-S(ab)\|  \\
     & \le & \lim_\alpha\|S\|\|b\|\|e_\alpha a-a\|  \\
     & = & 0
\end{eqnarray*}
for any $a,b\in A$.
\end{proof}

As a summary, in the present paper we have extended the results of
D.~Baki\'c and B.~Gulja\v{s} from~\cite{Bakic} about multipliers
of Hilbert $C^*$-modules to the case of left multipliers of
Hilbert $C^*$-modules. The well known facts about the left
multiplier algebra of a $C^*$-algebra are particular cases of our
results in case a $C^*$-algebra is considered as a Hilbert
$C^*$-module over itself. In forthcoming research it would be
interesting to investigate an analogue of quasi-multipliers of
$C^*$-algebras for Hilbert $C^*$-modules.

{\bf Acknowledgement:} The presented work is a part of the
research project ''$K$-theory, $C^*$-algebras, and Index Theory''
of Deutsche Forschungsgemeinschaft (DFG). The authors are grateful
to DFG for the support. The research was fulfilled during the
visit of the second author to the Leipzig University of Applied
Sciences (HTWK), and he appreciates its hospitality a lot.


\begin{thebibliography}{99}
\bibitem{Bakic} {\sc D. Baki\'c, B. Gulja{\v{s}}}, {Extensions of
   {H}ilbert {$C^*$}-modules}, {\it Houston J. Math.} {\bf 30}(2004),
   537-558.
\bibitem{Busby} {\sc R. C. Busby}, {Double centralizers and extensions of
   {$C^*$}-algebras}, {\it Trans. Amer. Math. Soc.} {\bf 132}(1968), 79-99.
\bibitem{FrankPOS} {\sc M. Frank}, {Geometrical aspects of {H}ilbert
   {$C^*$}-modules}, {\it Positivity}, {\bf 3}(1999), 215-243.
\bibitem{Frank1} {\sc M. Frank, D. R. Larson}, {A module frame concept
   for {H}ilbert {$C^*$}-modules}, {\it Contemp. Math.} {\bf 247}(2000),
   207-233.
\bibitem{Frank2} {\sc M. Frank, D. R. Larson}, Frames in Hilbert $C^*$-modules
   and $C^*$-algebras, {\it J. Operator Theory} {\bf 48}(2002), 273-314.
\bibitem{Green} {\sc P. Green}, {The local structure of twisted covariance
   algebras}, {\it Acta Math.} {\bf 140}(1978), 191-250.
\bibitem{Helemskii} {\sc A. Ya. Helemskii}, {\it Banach and Polinormed
   Algebras: General Theory, Representations, Homology (Russian)}, Nauka,
   Moscow, Russia, 1989.
\bibitem{Kasparov} {\sc G. G. Kasparov}, {Hilbert C*-modules: The theorems of
   Stinespring and Voiculescu}, {\it J.~Operator Theory} {\bf 4}(1980),
   133-150.
\bibitem{Lance} {\sc E. C. Lance}, {\it Hilbert {$C^*$}-modules - a toolkit
   for operator algebraists}, London Mathematical Society Lecture Note
   Series v.~210, Cambridge University Press, Cambridge, U.K.,  1995.
\bibitem{LinPJM} {\sc H. Lin}, {Injective {H}ilbert {$C^*$}-modules},
   {\it Pacific J. Math.} {\bf 154}(1992), 131-164.
\bibitem{MaTroBook} {\sc V. M. Manuilov, E. V. Troitsky}, {\it Hilbert
   {$C\sp *$}-modules}, Translations of Mathematical Monographs v.~226,
   American Mathematical Society, Providence, RI, U.S.A., 2005.
   \bibitem{Murphy}
  {\sc G. J. Murphy},
  {\it $C^*$-algebras and operator theory},
   Academic Press, 1990.
\bibitem{Pav1} {\sc A. A. Pavlov}, {Algebras of multiplicators and spaces of
   quasimultiplicators}, {\it Mosc. Univ. Math. Bull.} {\bf 53}(1998),
   no.~6, 13-16.
\bibitem{Pav2} {\sc A. A. Pavlov}, {Quasi-multipliers and left multipliers
   as strict essential extensions of {$C^*$}-algebras (Russian)},  {\it
   Fundam. Prikl. Mat.} {\bf 6}(2000), no.~4, 1141-1154.
\bibitem{R-W} {\sc I. Raeburn, D. P. Williams}, {\it Morita equivalence and
   continuous trace $C^*$-algebras}, Mathematical Surveys and Monographs
   v.~60, Amer. Math. Soc., Providence, R.I., 1998.
\bibitem{Raeburn} {\sc I. Raeburn, S. J. Thompson}, {Countably generated
   {H}ilbert modules, the {K}asparov stabilisation theorem, and frames in
   {H}ilbert modules}, {\it Proc. Amer. Math. Soc.} {\bf 131}(2002),
   1557-1564.
\bibitem{Wegge-Olsen}  {\sc N. E. Wegge-Olsen},
   {\it $K$-theory and $C^*$-algebras}, Oxford University Press, 1993.
\end{thebibliography}
\end{document}